\newtheorem{theorem}{Theorem}[section]
\newtheorem{Proposition}[theorem]{Proposition}
\newtheorem{Definition}[theorem]{Definition}
\newtheorem{Corollary}[theorem]{Corollary}
\theoremstyle{definition}
\theoremstyle{remark}
\theoremstyle{example} 
\title{\bf On Decompositions of H-holomorphic\\ functions into quaternionic power series}
\author{Michael Parfenov}
\affil{\small \textit{Bashkortostan Branch of Russian Academy of Engineering, Ufa, Russia}} 
 \date{}
\begin{document}
\maketitle\unmarkedfntext{\!\!\!\!\!\!\!\!\!\!\!2020 Mathematics Subject Classification: 30G35. \\ Keywords:  quaternionic analysis, quaternionic Cauchy-Riemann’s equations, quaternionic holomorphic functions, quaternionic analytic functions, decomposition into quaternionic power series \\Date: July 31, 2024  \\e-mail address: mwparfenov@gmail.com as well as parfenm@gmx.de}
\begin{abstract}
Based on the full similarity in algebraic  properties and differentiation rules  between quaternionic (H-) holomorphic and complex (C-) holomorphic functions, we assume that there exists one holistic notion of a holomorphic function that has a H-representation in the case of quaternions and a  C-representation in the case of complex variables.  We get the essential  definitions and criteria for a quaternionic power series convergence, adapting  complex analogues to the quaternion case. It is established that the power series expansions of any holomorphic function in C- and H-representations are similar and  converge  with identical convergence radiuses.   We define a H-analytic function and prove that every H-holomorphic function is H-analytic.  Some examples of power series expansions are given.   \newline   
\end{abstract}
\section{Introduction}
A  possibility of a decomposition of  functions into  power series  is interesting in particular because it defines the notion of analytic functions  in classical mathematical analysis.   For spaces of dimension more than two,  for example, there are works devoted to quaternionic power series in the cases of regular  \cite{sud:qua} and slice-regular \cite{gs:pow}  quaternionic functions as well as in the more generalized case of Clifford-holomorphic \cite{ghs:fun} functions. We discuss here some aspects of power series expansions within the framework of the theory of the so-called H-holomorphic functions \cite{pm:eac}, \cite{pm:onno},  uniting the left and the right approaches. Now, we recall some results from this theory \cite{pm:eac}  needed for the sequel.

An independent quaternionic vaviable $p$ is further denoted by
$$p=x+yi+zj+uk,$$
and correspondingly a quaternion-valued function $\psi\left(p\right)$ by
$$\psi\left(p\right)=\psi_{1}\left(x,y,z,u\right)+\psi_{2}\left(x,y,z,u\right)i+\psi_{3}\left(x,y,z,u\right)j+\psi_{4}\left(x,y,z,u\right)k,$$
where $x,y,z,u$ and  $\psi_{1}\left(x,y,z,u\right),\psi_{2}\left(x,y,z,u\right),\psi_{3}\left(x,y,z,u\right),\psi_{4}\left(x,y,z,u\right)$ are real-valued, and $i,j,k$ are the base quaternions of the quaternion space $\mathbb{H}$ defined by the expressions
$$i^{2}=j^{2}=k^{2}=-1,$$
$$ij=-ji=k,\,\,\,\,jk=-kj=i,\,\,\,\,ki=-ik=j.$$

In the Cayley–Dickson construction (doubling form,  doubling procedure) \cite[p.~41]{ks:hn} we have
\begin{equation} \label{Eq1}
 \begin{gathered}
p=a+b\cdot j\in \mathbb{H},\;\;\;\;\;\;\psi\left(p\right)=\Phi_{1}\left(a,b,\overline{a}, \overline{b} \right)+\Phi_{2\cdot}\left(a,b,\overline{a}, \overline{b} \right)\cdot j \in \mathbb{H},  \\
\,\,\, \overline{ p}=\overline{ a}-b\cdot j\subset \mathbb{H}\text{,} \;\;\;\;\;\; \overline{ \psi\left(p\right)}=\overline{\Phi_{1}\left(a,b,\overline{a}, \overline{b} \right)}\;- \Phi_{2\cdot}\left(a,b,\overline{a}, \overline{b} \right)\cdot j \subset \mathbb{H} \text{,}
\end{gathered}
\end{equation}
where
\begin{equation}  \label{Eq2}
 \begin{gathered}
 a=x+yi,\;\;\;\;\;b=z+ui, \\ 
 \overline{a}=x-yi,\;\;\;\;\; \overline{b}=z-ui,
\end{gathered}
\end{equation}
$$\Phi_{1}\left(a,b,\overline{a}, \overline{b} \right)=\psi_{1}+\psi_{2}i, \;\;\;\;\;\Phi_{2}\left(a,b,\overline{a}, \overline{b} \right)=\psi_{3}+\psi_{4}i,$$
$$\overline{ \Phi_{1}\left(a,b,\overline{a}, \overline{b} \right)}=\psi_{1}-\psi_{2}i,\;\;\;\; \overline{ \Phi_{2}\left(a,b,\overline{a}, \overline{b} \right)}=\psi_{3}-\psi_{4}i$$
are compex quantities, the \textquotedblleft  $\cdot$\textquotedblright  \,and overbar signs denote, respectively, quaternionic multiplication and complex (or quaternionic, if needed) conjugation. For simplicity, we also use  the short designations $\Phi_{1}$ and $\Phi_{2},$ respectively, instead of $\Phi_{1}\left(a,b,\overline{a}, \overline{b} \right)$ and $\Phi_{2}\left(a,b,\overline{a}, \overline{b} \right)$. Since the quaternionic imaginary unit $j$ in the Cayley–Dickson doubling form should always be  recorded  behind the complex function $\Phi_{2}\left(a,b,\overline{a}, \overline{b} \right)$, to achieve this goal we often use the known \cite[\!\!p.~\!\!42]{ks:hn} relation $j\alpha=\overline{\alpha}j$, where $\alpha$ is any complex function.

Further, we use  the following necessary and sufficient conditions (\cite[\!p.~\!\!15]{pm:eac}, \cite[\!p.~\!\!4]{pm:onno}) for a function $\psi\left(p\right)$ to be H-holomorphic:
\begin{Definition}
It is assumed that the constituents $\Phi_{1}\left(a,b,\overline{a}, \overline{b} \right)$ and $\Phi_{2}\left(a,b,\overline{a}, \overline{b} \right)$ of a quaternionic function $\psi\left(p\right)=\Phi_{1}+\Phi_{2}\cdot j$ possess continuous first-order partial derivatives with respect to $a,\overline{a},b,$ and $\overline{b}$ in some open connected neighborhood $G_{4}\subset \mathbb{H}$ of a point $p\in \mathbb{H}$. Then a function $\psi\left(p\right)$ is said to be H-holomorphic (denoted by $\psi_{H}\!\left(p\right)$) at a point $p$ if and only if the functions $\Phi_{1}\left(a,b,\overline{a}, \overline{b} \right)$ and $\Phi_{2}\left(a,b,\overline{a}, \overline{b} \right)$ satisfy in $G_{4}$ the following quaternionic generalization of Cauchy-Riemann's equations:
\begin{equation} 
\label{Eq3} \left\{
\begin{aligned}
1)\,\,\,\,(\,\partial_{a}\Phi_{1}\!\!\mid \,\,=(\,\partial_{\overline{b}}\overline{\Phi_{2} }\! \mid, \,\,\,\,\,\,\,\,\,\,\,\,\,\,2)\,\,\,\,(\,\partial_{a}\Phi_{2}\!\!\mid &=-\,(\,\partial_{\overline{b}}\overline{\Phi_{1} }\! \mid 
,\\
3)\,\,\,\,(\,\partial_{a}\Phi_{1}\!\!\mid \,\,=(\,\partial_{b}\Phi_{2}\! \mid, \,\,\,\,\,\,\,\,\,\,\,\,\,\,4)\,\,\,\,(\,\partial_{\overline{a}}\Phi_{2}\!\!\mid &=-\,(\,\partial_{\overline{b}}\Phi_{1}\!\! \mid .
\end{aligned}
\right. 
\end{equation}
\end{Definition}
Here $\partial_{i},\,i=a,\overline{a},b,\overline{b},$ denotes the partial derivative with respect to $i$. The brackets $\left(\cdots{}\!\!\mid\right.$ with the closing vertical bar indicate that the transition $a=\overline{a}=x$ (to 3D space: $p=x+yi+zj+uk\rightarrow p_{3}=x+zj+uk$) has been already performed in expressions enclosed in brackets. Equations (\ref{Eq3}-1) and (\ref{Eq3}-2) have the components relating to the left quaternionic derivative \cite[\!p.~\!\!4]{pm:onno} and equations (\ref{Eq3}-3) and (\ref{Eq3}-4) to the right one. The requirement  $a=\overline{a}=x$ provides a possibility of joint implementation of equations (\ref{Eq3}-1,2) for the left quaternionic derivative and (\ref{Eq3}-3,4) for the right one. Equations (\ref{Eq3}-1) and (\ref{Eq3}-3) as well as (\ref{Eq3}-2) and (\ref{Eq3}-4) become, respectively, identical after the transition to 3D space \cite[\!\!p.~\!\!4]{pm:onno}, i.e. the left derivative becomes equal to the right one.

The H-holomorphy conditions (\ref{Eq3}) are defined so that during the check of the quaternionic holomorphy of any quaternionic function we have to do the transition  $a=\overline{a}=x$ in already calculated expressions for the partial derivatives of the functions $\Phi_{1}\left(a,b,\overline{a}, \overline{b} \right)$ and  $\Phi_{2}\left(a,b,\overline{a}, \overline{b} \right)$ and their complex conjugations.   Nevertheless, this doesn’t mean that we deal with triplets, since the transition $a=\overline{a}=x$ (or $y=0$) can not be initially done for quaternionic variables and functions. Otherwisewe we would lose a division operation. Any quaternionic function remains the same 4-dimensional quaternionic function regardless of whether we check its holomorphy or not. Simply put, the H-holomorphic functions are 4-dimensional quaternionic functions for which the partial derivatives of components of  the Cayley–Dickson doubling form satisfy equations (\ref{Eq3}) after the transition to 3D space. More clearly, they are those quaternionic functions whose the left and the right derivatives become equal after the transition to 3D space. 

Denoting the complex space (plane) by $\mathbb{C}$ and a complex holomorphic (C-holomorph-\\ic) function by $\psi_{C}\!\left(\xi\right)$, where $\xi$ is an independent complex variable, we have  the following
\begin{theorem} \label{th1.2}
Let a complex-valued function $\psi_{C}\!\left(\xi\right):G_{2}\rightarrow \mathbb{C}$ be C-holomorphic everywhere in an open connected  set $G_{2}\subseteq \mathbb{C}$, except, possibly, \!at certain singularities. Then a H-holomorphic function $\psi_{H}\!\left(p\right)$ of the same kind as $\psi_{C}\!\left(\xi\right)$ can be constructed (without change of a kind of function) from $\psi_{C}\!\left(\xi\right)$ by replacing a complex variable $\xi\in G_{2}$ in an expression for $\psi_{C}\!\left(\xi\right)$ by a quaternionic variable $p\in G_{4}\subseteq \mathbb{H}$, where $G_{4}$ is defined (except, possibly, at certain singularities) by the relation $G_{4}\supset G_{2}$ in the sense that $G_{2}$  exactly follows from $G_{4}$ upon transition from $p$ to $\xi$.  
\end{theorem}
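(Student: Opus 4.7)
My strategy is to reduce the theorem to the monomial case via local power series expansion. I would fix a point $\xi_0\in G_2$ (away from singularities) and use the classical fact that a C-holomorphic function is C-analytic to write $\psi_{C}(\xi)=\sum_{n=0}^{\infty}c_n(\xi-\xi_0)^n$ on some disk $|\xi-\xi_0|<R$, with absolute and uniform convergence on compact subdisks. Replacing the complex variable $\xi$ by a quaternionic variable $p$ (and $\xi_0$ by its quaternionic counterpart $p_0$) yields the formal series $\psi_{H}(p)=\sum_{n=0}^{\infty}c_n(p-p_0)^n$, which I would then show defines a H-holomorphic function on the ball $|p-p_0|<R$, thereby producing the required $\psi_H$ without changing the functional ``kind'' (polynomial, rational, exponential, etc.).

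The crux is to verify that every monomial $q^n$, with $q=p-p_0=a'+b'\cdot j$ in Cayley--Dickson form, is H-holomorphic. I would proceed by induction on $n$: the cases $n=0,1$ are direct, while the step from $n$ to $n+1$ uses $q^{n+1}=q\cdot q^n$ together with the commutation rule $j\alpha=\overline{\alpha}j$ to express $\Phi_1^{(n+1)},\Phi_2^{(n+1)}$ in terms of $\Phi_1^{(n)},\Phi_2^{(n)}$. Substituting these into the four equations (\ref{Eq3}) and performing the transition $a'=\overline{a'}=x$ should collapse both the left-derivative conditions (\ref{Eq3}-1,2) and the right-derivative conditions (\ref{Eq3}-3,4) to the single scalar identity $\tfrac{d}{dq}q^n=nq^{n-1}$ in the 3D slice, matching the complex derivative formula $\tfrac{d}{d\xi}\xi^n=n\xi^{n-1}$ that is built into the C-holomorphy of $\psi_C$.

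Once monomials are handled, the linearity of (\ref{Eq3}) in $\Phi_1,\Phi_2$ promotes H-holomorphy from single monomials to arbitrary polynomial partial sums, and the absolute-uniform convergence of $\sum c_n(p-p_0)^n$ on compact sub-balls of $|p-p_0|<R$ justifies termwise partial differentiation with respect to $a',\overline{a'},b',\overline{b'}$, so the four H-holomorphy conditions pass to the limit. The set $G_4$ is then built by gluing these local quaternionic balls across $G_2$; because $|p|$ restricts to $|\xi|$ on the complex slice of $\mathbb{H}$, setting the $j$-component of $p$ to zero recovers the original complex domain $G_2$ exactly, which is precisely the meaning of the inclusion $G_4\supset G_2$ in the statement.

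I expect the main obstacle to be the combinatorial bookkeeping for $q^n$: the repeated application of $j\alpha=\overline{\alpha}j$ mixes $a',\overline{a'},b',\overline{b'}$ in a way that yields no obviously clean closed form for $\Phi_1^{(n)}$ and $\Phi_2^{(n)}$ simultaneously for all $n$. The substantial simplification that emerges only after the transition $a'=\overline{a'}=x$ is therefore essential, and an efficient presentation would likely formulate the induction not in terms of the full Cayley--Dickson expansions of $\Phi_1^{(n)},\Phi_2^{(n)}$, but only in terms of their first-order partial derivatives evaluated on the 3D slice, which is the only data the system (\ref{Eq3}) actually sees.
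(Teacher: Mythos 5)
The paper never proves Theorem \ref{th1.2} in this text: it is recalled verbatim from the author's earlier works (\cite{pm:eac}, \cite{pm:onno}), where the verification is done by explicit computation for concrete classes of functions. So your attempt cannot be compared to an internal proof; it has to stand on its own, and as it stands it is a plan with at least one step that would actually fail. The fatal point is your use of a local Taylor expansion $\psi_C(\xi)=\sum_n c_n(\xi-\xi_0)^n$ about an arbitrary center $\xi_0\in G_2$ and the substitution $\xi-\xi_0\mapsto p-p_0$. For a non-real center the coefficients $c_n$ are genuinely complex, and by the paper's own Proposition \ref{pr2.1} a H-holomorphic function multiplied by a non-real constant is \emph{not} H-holomorphic (the equation $C_1\partial_b f_2=\overline{C_1}\partial_{\overline b}\overline{f_2}$ forces $C_1=\overline{C_1}$). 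The entire series machinery of Section 3 is therefore built only for real coefficients $r_l\in\mathbb{R}$ and center $p_0=0$, and the paper explicitly warns that coefficients containing $i$ must have $i$ replaced by the purely imaginary unit quaternion $r$, not carried over as complex scalars. Your terms $c_n(p-p_0)^n$ would in general violate the system (\ref{Eq3}), so the reduction to monomials does not go through in the form you describe.

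Beyond that, the two load-bearing steps are asserted rather than executed. First, the induction showing that $q^n$ satisfies all four equations of (\ref{Eq3}) after the transition $a=\overline a=x$ is exactly the ``combinatorial bookkeeping'' you defer; it is the substantive content of the theorem and cannot be waved at, especially since for $q=p-p_0$ with $p_0\notin\mathbb{R}$ it is not even true in the paper's framework. Second, passing H-holomorphy to the limit of partial sums requires knowing that the termwise partial derivatives with respect to $a,\overline a,b,\overline b$ converge to the derivatives of the sum; in this paper that is Proposition \ref{pr3.8}, whose proof \emph{uses} Theorem \ref{th1.2}, so invoking it here would be circular --- you would need an independent uniform-convergence argument for the differentiated series. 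Finally, the gluing of local balls into a single $G_4$ needs a quaternionic identity principle (agreement of two local representations on the complex slice does not obviously force agreement on the four-dimensional overlap), which you do not supply. A proof in the spirit of the cited sources would instead work with a single global expression for $\psi_C$ and verify (\ref{Eq3}) directly for the substituted function, which is what the examples in Section \ref{sub4.1} actually do.
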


We also need the following  \cite[\!p.~\!\!15]{pm:eac}
\begin{theorem} \label{th1.3}
Let a continuous quaternion-valued function $\psi_{H}\!\left(p\right)=\Phi_{1}+\Phi_{2}\cdot j,$
where $\Phi_{1}\left(a,b,\overline{a}, \overline{b} \right)$ and $\Phi_{2}\left(a,b,\overline{a}, \overline{b} \right)$ are differentiable with respect to $a,b,\overline{a}$ and $\overline{b}$, be  H-holomorphic everywhere in its domain of definition $G_{4}\subseteq \mathbb{H}$. Then its full quaternionic derivative (uniting the left and the right derivatives) with respect to $p$ defined by
\begin{equation}  
\label{Eq4} \psi_H^{'}\!\!\left(p\right)=\Phi_1^{'}+\Phi_2^{'}\cdot j,   
\end{equation}
where
\begin{equation}
\label{Eq5} \Phi_1^{'}= \partial_{a}\Phi_{1}+\partial_{\overline{a}}\Phi_{1},\,\,\,\,\Phi_{2}^{'}= \partial_{a}\Phi_{2}+\partial_{\overline{a}}\Phi_{2},
\end{equation}
is also H-holomorphic in $G_{4}.$
 If a quaternion function $\psi_{H}\!\left(p\right)$  is once quaternion-\\differentiable with respect to $p$ in $G_{4},$ then it possesses the full quaternionic derivatives of all orders in $G_{4},$ each one H-holomorphic. 
\end{theorem}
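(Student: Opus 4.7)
The strategy for the first claim, that $\psi_H'$ is H-holomorphic, is to verify the four conditions (\ref{Eq3}) directly for the pair $(\Phi_1',\Phi_2')$ in place of $(\Phi_1,\Phi_2)$. The pivotal observation I would use is the Wirtinger-type identity $\partial_a+\partial_{\overline{a}}=\partial_x$; together with (\ref{Eq5}) this recasts the full derivative in the compact form $\Phi_1'=\partial_x\Phi_1$ and $\Phi_2'=\partial_x\Phi_2$. This reduction is important because $\partial_x$ is the unique first-order combination of $\partial_a$ and $\partial_{\overline{a}}$ that is tangent to the 3D subspace $\{a=\overline{a}=x\}$ on which the H-holomorphy relations are evaluated, so it is compatible with the ``transition'' bracket notation.

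To verify, say, (\ref{Eq3}-1) for $\psi_H'$, I would apply $\partial_x$ to both sides of the corresponding relation for $\psi_H$, namely $(\partial_a\Phi_1\!\mid\;=(\partial_{\overline{b}}\overline{\Phi_2}\!\mid$. Because the operator $\partial_x$ is tangential to $\{y=0\}$, it commutes with the transition bracket; by Clairaut's theorem it also commutes with $\partial_a$, $\partial_{\overline{a}}$, $\partial_b$, $\partial_{\overline{b}}$. This gives $\partial_x\partial_a\Phi_1=\partial_a(\partial_x\Phi_1)=\partial_a\Phi_1'$, and likewise $\partial_x\partial_{\overline{b}}\overline{\Phi_2}=\partial_{\overline{b}}\overline{\Phi_2'}$ (using that $x\in\R$ commutes with conjugation, so $\overline{\partial_x\Phi_2}=\partial_x\overline{\Phi_2}$). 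Hence $(\partial_a\Phi_1'\!\mid\;=(\partial_{\overline{b}}\overline{\Phi_2'}\!\mid$, which is (\ref{Eq3}-1) for $\psi_H'$. The remaining three equations (\ref{Eq3}-2,3,4) yield to the same recipe applied term-by-term; sign changes and the interchange of $b$ with $\overline{b}$ or $a$ with $\overline{a}$ on either side of the originating identity are preserved verbatim under $\partial_x$.

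For the second claim, I would argue by induction. The base case is the first claim just outlined. For the inductive step, once $\psi_H^{(n)}$ is known to be H-holomorphic, the first claim applied to it would yield H-holomorphy of $\psi_H^{(n+1)}$, \emph{provided} the components of $\psi_H^{(n)}$ are themselves differentiable in $a,b,\overline{a},\overline{b}$. To secure this regularity at every step I would invoke Theorem \ref{th1.2}: any H-holomorphic $\psi_H(p)$ is obtained from a C-holomorphic $\psi_C(\xi)$ by the substitution $\xi\mapsto p$, and since C-holomorphic functions are $C^\infty$ by classical complex analysis, the doubling-form constituents $\Phi_1,\Phi_2$ inherit smoothness of all orders. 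Hence every $\psi_H^{(n)}$ meets the regularity hypothesis of the first claim, closing the induction.

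The main obstacle I anticipate is precisely the bookkeeping around the transition bracket: one must be careful that the manipulations are performed \emph{before} the transition is imposed and that only operators tangent to $\{a=\overline{a}\}$ (i.e.\ $\partial_x$ and the spectator derivatives $\partial_b,\partial_{\overline{b}}$) are used to differentiate the H-holomorphy relations; individual applications of $\partial_a$ or $\partial_{\overline{a}}$ would leave the subspace and cannot be justified. This is exactly why (\ref{Eq5}) is built from the symmetric combination $\partial_a+\partial_{\overline{a}}$, and recognising this structural point is what makes the argument go through cleanly.
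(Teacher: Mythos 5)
You should first be aware that this paper does not prove Theorem \ref{th1.3} at all: it is quoted from the author's earlier work (\cite[p.~15]{pm:eac}), so there is no in-paper proof to compare your argument against. Judged on its own terms, your argument captures what is almost certainly the right mechanism inside this framework: the identity $\partial_a+\partial_{\overline{a}}=\partial_x$ shows that $\Phi_1'=\partial_x\Phi_1$ and $\Phi_2'=\partial_x\Phi_2$, and $\partial_x$ (unlike $\partial_a$ or $\partial_{\overline{a}}$ separately) is tangential to the slice $a=\overline{a}=x$ on which equations (\ref{Eq3}) hold, so it may legitimately be applied to those restricted identities; it commutes with the remaining Wirtinger operators by equality of mixed partials, and with conjugation because $\partial_a+\partial_{\overline{a}}$ is a real direction, symmetric in $a$ and $\overline{a}$. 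That yields equations (\ref{Eq3}) for the pair $(\Phi_1',\Phi_2')$, hence the first claim, and the induction for higher orders is the natural continuation.

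Two caveats. First, the regularity bookkeeping is genuinely load-bearing: the theorem as stated assumes only first-order differentiability of $\Phi_1,\Phi_2$, whereas your appeal to Clairaut --- and indeed the very assertion that $\psi_H'$ satisfies Definition 1.1, which demands continuous first-order partials of its components, i.e.\ continuous second-order partials of $\Phi_1,\Phi_2$ --- requires more. You patch this by reading Theorem \ref{th1.2} in the converse direction (every H-holomorphic function arises from a C-holomorphic one and so inherits $C^\infty$ smoothness); this is how the paper itself uses that theorem elsewhere, for instance in the proof of Proposition \ref{pr3.10}, but it is not literally what Theorem \ref{th1.2} asserts, and you should flag that you are assuming this converse. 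Second, a small slip: $\partial_x$ is not the \emph{unique} tangential first-order combination of $\partial_a$ and $\partial_{\overline{a}}$ (any real multiple is also tangential); what matters is only that the symmetric combination $\partial_a+\partial_{\overline{a}}$ is tangential while the summands individually are not. Neither caveat undermines the structure of your argument.
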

The generalized formula for the full $k'th$ quaternionic derivative \cite[\!\!p.~\!\!7]{pm:onno} is the following:
\begin{equation}
\label{Eq6} \psi_H^{\left(k\right)}\!\!\left(p\right)=\Phi_1^{(k)}+\Phi_2^{(k)}\cdot j,  
\end{equation}
where the constituents $\Phi_1^{(k)}$ and $\Phi_2^{(k)}$ are defined by
\begin{equation} 
 \label{Eq7}
\Phi_1^{(k)}=\partial_{a}\Phi_1^{(k-1)}\!\!+\partial_{\overline{a}}\Phi_1^{(k-1)},  \,\,\,\,\,\,\,\Phi_2^{(k)}=\partial_{a}\Phi_2^{(k-1)}\!\!+\partial_{\overline{a}}\Phi_2^{(k-1)}
\end{equation} 
and $\Phi_1^{(k-1)} \text{and}\,\, \Phi_2^{(k-1)}$ are the constituents of the $\left(k-1\right)^{'}\!\text{th}$ full derivative of  $\psi_{H}\left(p\right)$ represented in the Cayley–Dickson doubling form as $ \psi_H^{\left(k-1\right)}\!\!\left(p\right)=\Phi_1^{(k-1)}+\Phi_2^{(k-1)}\cdot j,  k\geq1;$ $\Phi_1^{(0)}=\Phi_{1}\!\!\left(a,b,\overline{a}, \overline{b} \right) \text{and}\,\, \Phi_2^{(0)}=\Phi_{2}\!\!\left(a,b,\overline{a}, \overline{b} \right)\!.$  

Theorem \ref{th1.3} leads  \cite[\!p.~\!\!15]{pm:eac} to the following   
\begin{Corollary} \label{co1.4} 
(a similarity between differentiating rules for C-holomorhic and H-holomorhic functions).\! All expressions for derivatives of a H-holomorphic function $\psi_{H}\!\left(p\right)$ of the same kind as a C-holomorphic analogue $\psi_{C}\!\left(\xi\right)$  have the same forms as the expressions for corresponding derivatives of a function $\psi_{C}\!\left(\xi\right).$
\end{Corollary}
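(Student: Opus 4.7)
The plan is to combine Theorem \ref{th1.2} with Theorem \ref{th1.3}. Theorem \ref{th1.3} produces the full quaternionic derivative $\psi_{H}^{'}(p)$ via the explicit formula (\ref{Eq5}) and asserts that it is again H-holomorphic; Theorem \ref{th1.2} sets up the one-to-one substitution correspondence $\xi\leftrightarrow p$ between a C-holomorphic function and the H-holomorphic function of the same kind. Hence, in order to conclude that the expression for $\psi_{H}^{'}(p)$ has the same form as that for $\psi_{C}^{'}(\xi)$, it is enough to verify that the transition $p\to\xi$ (that is, $b=\overline{b}=0$ in the Cayley--Dickson writing $p=a+b\cdot j$) carries $\psi_{H}^{'}(p)$ back precisely to $\psi_{C}^{'}(\xi)$; Theorem \ref{th1.2} then promotes this identification on the complex slice to an identification of the formal expressions.

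I would carry out the verification as follows. Since $\psi_{H}(p)$ is constructed from $\psi_{C}(\xi)$ by replacing $\xi$ with $p$, and $\psi_{C}$ depends only on $\xi$ (not on $\overline{\xi}$), the Cayley--Dickson constituents satisfy $\left.\Phi_{1}\right|_{b=\overline{b}=0}=\psi_{C}(\xi)$ (identifying $a$ with $\xi$) and $\left.\Phi_{2}\right|_{b=\overline{b}=0}\equiv 0$. Because $\partial_{a}$ and $\partial_{\overline{a}}$ commute with the restriction $b=\overline{b}=0$ under the smoothness hypothesis, formula (\ref{Eq5}) yields at $p=\xi$ that
\[
\left.\Phi_{1}^{'}\right|_{p=\xi}=\partial_{\xi}\psi_{C}(\xi)+\partial_{\overline{\xi}}\psi_{C}(\xi)=\psi_{C}^{'}(\xi),\qquad \left.\Phi_{2}^{'}\right|_{p=\xi}=0,
\]
where $\partial_{\overline{\xi}}\psi_{C}=0$ by the standard complex Cauchy--Riemann equations. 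Thus $\left.\psi_{H}^{'}(p)\right|_{p=\xi}=\psi_{C}^{'}(\xi)$; combined with the H-holomorphy of $\psi_{H}^{'}(p)$ from Theorem \ref{th1.3}, this forces, via Theorem \ref{th1.2}, the expression for $\psi_{H}^{'}(p)$ to be obtained from that for $\psi_{C}^{'}(\xi)$ by the substitution $\xi\to p$, which is the assertion of the corollary for the first derivative.

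For derivatives of all orders I would argue by induction, using (\ref{Eq6})--(\ref{Eq7}). If $\psi_{H}^{(k-1)}(p)$ already has the same form as $\psi_{C}^{(k-1)}(\xi)$, then the pair $(\Phi_{1}^{(k-1)},\Phi_{2}^{(k-1)})$ plays the role of $(\Phi_{1},\Phi_{2})$ in the argument above, and the same computation gives that $\psi_{H}^{(k)}(p)$ has the same form as $\psi_{C}^{(k)}(\xi)$. The main obstacle is the implicit appeal to uniqueness in Theorem \ref{th1.2}: one must know that an H-holomorphic function whose restriction under $p\to\xi$ equals a given C-holomorphic function must coincide with the H-extension produced by that theorem, so that agreement on the complex slice forces agreement of the full quaternionic expressions. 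Once this is granted, the remaining ingredients --- commutation of $\partial_{a},\partial_{\overline{a}}$ with the restriction and the vanishing of $\partial_{\overline{\xi}}\psi_{C}$ --- are routine.
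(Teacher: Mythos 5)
Your route is genuinely different from the paper's. The paper does not argue via restriction to the complex slice at all: it presents Corollary \ref{co1.4} as a consequence of Theorem \ref{th1.3}, deferring the details to the cited reference, and its only in-text justification is the remark that the full derivative (\ref{Eq4})--(\ref{Eq5}) unites the left derivative (through $\partial_{a}\Phi_{2}$, tied to equation (\ref{Eq3}-2)) with the right one (through $\partial_{\overline{a}}\Phi_{2}$, tied to equation (\ref{Eq3}-4)), which is why differentiation reproduces the complex rules. Your slice computation --- $\Phi_{1}\vert_{b=\overline{b}=0}=\psi_{C}(a)$, $\Phi_{2}\vert_{b=\overline{b}=0}=0$, hence by (\ref{Eq5}) $\Phi_{1}^{'}\vert_{b=\overline{b}=0}=\partial_{a}\psi_{C}=\psi_{C}^{'}$ and $\Phi_{2}^{'}\vert_{b=\overline{b}=0}=0$ --- is correct (the partial derivatives in $a,\overline{a}$ commute with setting $b=\overline{b}=0$, and every worked example in Section 4 has $\Phi_{2}$ proportional to $b$), and it is more informative than the paper's remark.

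However, the decisive step is exactly the one you flag and do not close, and it is a genuine gap rather than a routine formality. What your argument establishes is that $\psi_{H}^{'}(p)$ is H-holomorphic and restricts to $\psi_{C}^{'}(\xi)$ on the complex slice. To conclude that the \emph{expression} for $\psi_{H}^{'}(p)$ is the one obtained from $\psi_{C}^{'}(\xi)$ by the substitution $\xi\to p$, you need a uniqueness (identity) statement: an H-holomorphic function on the ball that vanishes on the slice $b=\overline{b}=0$ vanishes identically, i.e.\ the extension produced by Theorem \ref{th1.2} is the \emph{only} H-holomorphic function with the given restriction. Theorem \ref{th1.2} is stated purely as an existence result, and nothing in the paper supplies uniqueness; without it, agreement of two functions of four real variables on a two-real-dimensional slice determines nothing (e.g.\ $z^{2}+u^{2}$ vanishes on that slice), so the needed rigidity must come from the H-holomorphy equations (\ref{Eq3}) and (\ref{Eq11}) and has to be proved, not granted. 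As written, your argument therefore proves the restriction identity but not the corollary; to complete it you must either establish that uniqueness principle from the structure of the doubling-form constituents, or abandon the slice argument and argue, as the paper does, directly from the closure of the class of H-holomorphic functions under the full derivative together with the claimed algebraic isomorphism.
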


For example, if the second derivative of the C-holomorphic function $\psi_{C}\!\left(\xi\right)=\xi^{k},\,\,k=1,2,3,\dots,$ is $\psi_C^{\left(2\right)}\!\left(\xi\right)=  k\left(k-1\right)\xi^{k-2},$  then the second full derivative of the H-holomorphic function $\psi_{H}\!\left(p\right)=p^{k}$ is $\psi_H^{\left(2\right)}\!\left(p\right) =k\left(k-1\right)p^{k-2}.$ Corollary \ref{co1.4} is illustrated by numerous examples of elementary quaternionic functions presented in \cite{pm:eac}, \cite{pm:onno}, \cite{pm:oph}.

Such a similarity is based on the above-mentioned uniting (see \cite[\!p.~\!\!18]{pm:oph}) the left and the right quaternionic derivatives, giving the full quaternionic derivative (\ref{Eq4}). Indeed, the component $\Phi_{2}^{'}$ in (\ref{Eq5}) consists of  the derivatives $\partial_{a}\Phi_{2}$ and $\partial_{\overline{a}}\Phi_{2}.$ The derivative $\partial_{a}\Phi_{2}$ belongs to equation (\ref{Eq3}-2) associated with the left quaternionic derivative, and the derivative $\partial_{\overline{a}}\Phi_{2}$ belongs to equation (\ref{Eq3}-4) associated with the right quaternionic derivative.  This also applies  to higher derivatives.
 
 \hspace{20mm}

\textit{The  goal of this article is to show that the  full similarity in algebraic properties and differentiation rules between the  C- and H-holomorphic functions, based on Theorems \ref{th1.2},  \ref{th1.3}, and Corollary \ref{co1.4} ,  can be extended to decompositions of H-holomorphic functions into infinite quaternionic power series.}   
\section{H-holomorphic linear combinations}
We need to establish, under what conditions a finite linear combination of H-holomorphic functions can also be H-holomorphic. We have first of all to consider the following
\begin{Proposition} \label{pr2.1} 
(multiplying by a constant). A H-holomorphic function $f_{H}\!\left(p\right)$ multiplied by a constant $r$: 
\begin{equation}
\label{Eq8} \psi_{H}\!\left(p\right)=rf_{H}\!\left(p\right) 
\end{equation} 
is also H-holomorphic only if a constant $r$ is real-valued.
Then the full $k'th$ order H-holomorphic derivative of a H-holomorphic function multiplied by a constant is the following: 
\begin{equation}
\label{Eq9} \psi_H^{(k)}\!\left(p\right) = \left(rf_{H}\!\left(p\right)\right)^{\left(k\right)}=r f_H^{\left(k\right)}\!\left(p\right).
\end{equation}
\end{Proposition}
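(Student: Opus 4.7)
The plan is to work throughout in the Cayley--Dickson form. Writing $f_H(p) = \Phi_1 + \Phi_2 \cdot j$ with $\Phi_1, \Phi_2$ satisfying (\ref{Eq3}), and the constant $r = r_1 + r_2 \cdot j$ with $r_1, r_2 \in \mathbb{C}$, I use the commutation $j\alpha = \overline{\alpha}\,j$ to carry out the quaternionic product explicitly and obtain
\begin{equation*}
r \cdot f_H(p) \;=\; \bigl(r_1 \Phi_1 - r_2 \overline{\Phi_2}\bigr) \;+\; \bigl(r_1 \Phi_2 + r_2 \overline{\Phi_1}\bigr) \cdot j .
\end{equation*}
This identifies the Cayley--Dickson constituents of $\psi_H$ as $\tilde{\Phi}_1 = r_1 \Phi_1 - r_2 \overline{\Phi_2}$ and $\tilde{\Phi}_2 = r_1 \Phi_2 + r_2 \overline{\Phi_1}$, and reduces the whole proposition to analyzing when these satisfy the system (\ref{Eq3}).

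The \emph{sufficient} direction is straightforward: when $r \in \mathbb{R}$ we have $r_2 = 0$ and $\tilde{\Phi}_1 = r\Phi_1$, $\tilde{\Phi}_2 = r\Phi_2$; the four relations in (\ref{Eq3}) are $\mathbb{R}$-linear in the constituents and their partial derivatives, so they pass immediately from $(\Phi_1,\Phi_2)$ to $(r\Phi_1, r\Phi_2)$.

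For the \emph{necessary} direction, I test against the H-holomorphic function $f_H(p) = p^2$, whose Cayley--Dickson constituents (computed from $(a + bj)^2$) are $\Phi_1 = a^2 - b\overline{b}$ and $\Phi_2 = b(a + \overline{a})$. Substituting into $\tilde{\Phi}_1$ and $\tilde{\Phi}_2$, differentiating, and then performing the transition $a = \overline{a} = x$, condition (\ref{Eq3}-1) for $\psi_H = r p^2$ reduces to
\begin{equation*}
2 r_1 x - r_2 \overline{b} \;=\; 2 \overline{r_1}\, x - \overline{r_2}\, b, \qquad \text{for every } x \in \mathbb{R},\; b \in \mathbb{C}.
\end{equation*}
The coefficient of $x$ (set $b = 0$) forces $r_1 = \overline{r_1}$, so $r_1 \in \mathbb{R}$; the residual identity $r_2 \overline{b} = \overline{r_2}\, b$, tested at $b = 1$ and $b = i$, then forces $r_2 = 0$. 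Hence $r = r_1 \in \mathbb{R}$, as claimed.

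The derivative formula (\ref{Eq9}) follows by induction on $k$: from (\ref{Eq5}), the base case $\psi_H' = r f_H'$ is immediate since $\tilde{\Phi}_i' = \partial_a(r\Phi_i) + \partial_{\overline{a}}(r\Phi_i) = r\Phi_i'$ for $i = 1,2$, and the inductive step is the same computation applied to the recursion (\ref{Eq7}). The main obstacle is not computational but lies in the choice of a test function for necessity: the linear case $f_H(p) = p$ imposes only $r_1 \in \mathbb{R}$ and leaves $r_2$ entirely unconstrained, because $\Phi_2 = b$ carries no dependence on $\overline{a}$; one must therefore pass to a function such as $p^2$, in which $\Phi_2$ depends on both $a$ and $\overline{a}$, to extract the constraint that eliminates $r_2$.
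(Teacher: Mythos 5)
Your argument is sound and shares the paper's starting point --- the Cayley--Dickson expansion $\tilde{\Phi}_1=r_1\Phi_1-r_2\overline{\Phi_2}$, $\tilde{\Phi}_2=r_1\Phi_2+r_2\overline{\Phi_1}$, the linearity argument for sufficiency, and the derivation of (\ref{Eq9}) from the derivative recursion --- but your necessity direction takes a genuinely different route. The paper argues for an arbitrary H-holomorphic $f_H$: it brings in the auxiliary system (\ref{Eq11}) (in particular (\ref{Eq11}-a)) together with equation (\ref{Eq3}-2), and concludes $C_1=\overline{C_1}$ and $C_2=0$ from the assertion that identities such as $\partial_b\overline{f_1}=\partial_{\overline{b}}f_1$ ``cannot in general be satisfied.'' You instead produce one explicit witness, $f_H(p)=p^2$, and read off $r_1\in\mathbb{R}$ and $r_2=0$ from condition (\ref{Eq3}-1) by testing at $b=0$, $b=1$, $b=i$; that computation checks out. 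Your route is more elementary and pins down the quantifier the paper leaves implicit: since constants, and (as you note) $f_H(p)=p$ itself, satisfy all four equations of (\ref{Eq3}) with $r_1$ real and $r_2$ arbitrary, the ``only if'' can only mean that real constants are the sole ones preserving H-holomorphy for \emph{every} $f_H$, and a single well-chosen test function proves exactly that. What the paper's route buys is a statement phrased for generic $f_H$ without naming one, but at the price of leaning on (\ref{Eq11}), whose validity for all functions satisfying (\ref{Eq3}) is delegated to an external reference --- and which your $f_H(p)=p$ example shows is not a consequence of (\ref{Eq3}) alone. Your closing observation about why $p$ fails as a test function while $p^2$ succeeds makes precise exactly the point that the paper's ``cannot in general be satisfied'' glosses over.
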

\begin{proof}
Let a quaternionic function $f_{H}\!\left(p\right)=f_{1}\!\left(a,b,\overline{a},\overline{b} \right)+f_{2\cdot}\!\left(a,b,\overline{a},\overline{b} \right)\cdot j$ be H-holomorphic. Then it satisfies the conditions of H-holomorphy (\ref{Eq3}) as follows:
\begin{equation}
\label{Eq10}  \left\{
\begin{aligned}
1)\,\,\,\,(\,\partial_{a}f_{1}\!\!\mid \,\,=(\,\partial_{\overline{b}}\overline{f_{2} }\! \mid, \,\,\,\,\,\,\,\,\,\,\,\,\,\,2)\,\,\,\,(\,\partial_{a}f_{2}\!\!\mid &=-\,(\,\partial_{\overline{b}}\overline{f_{1} }\! \mid 
,\\
3)\,\,\,\,(\,\partial_{a}f_{1}\!\!\mid \,\,=(\,\partial_{b}f_{2}\! \mid, \,\,\,\,\,\,\,\,\,\,\,\,\,\,4)\,\,\,\,(\,\partial_{\overline{a}}f_{2}\!\!\mid &=-\,(\,\partial_{\overline{b}}f_{1}\!\! \mid .
\end{aligned}
\right. 
\end{equation}

As it has been found in \cite[\!p.\!~19]{pm:oph}, there exist in addition to main equations (\ref{Eq3}) for the H-holomorphic functions $\psi_{H}\left(p\right)=\Phi_{1}+\Phi_{2}\cdot j$ also the following equations:
\begin{equation}
\label{Eq11} \left\{ 
\begin{aligned}
a)\,\,\,\,\,\partial_{b}\Phi_{2} \!&=\,\partial_{\overline{b}}\overline{\Phi_{2}},\! \,\,\,\,\,\,\,\,\,\,\,\,\,\,b)\,\,\,\,\partial_{a}\Phi_{2}\!\! \,\,=-\,\partial_{\overline{b}}\Phi_{1},\! 
\\
c) \,\,\,\,\,\partial_{\overline{a}}\Phi_{1}\!&=\,\,\partial_{a}\overline{\Phi_{1} }, \!\,\,\,\,\,\,\,\,\,\,\,\,\,d)\,\,\,\,\partial_{\overline{a}} \Phi_{2} =-\,\,\partial_{\overline{b}}\overline{ \Phi_{1}},
\end{aligned}
\right.
\end{equation}
which hold true but already without having to carry out the transition to 3D space.
These equations are illustrated by all presented in \cite{pm:eac},  \cite{pm:onno} examples of H-holomorphic functions. 

Consider a quaternionic product $\psi\left(p\right)= C \cdot f_{H}\!\left(p\right)\!,$ where we initially suppose that the constant $C=C_{1}+C_{2}\cdot j$ is quaternion-valued, i.e. $C_{1}$ and $C_{2}$ are complex-valued constants. We need to define values of constants $C_{1}$ and $C_{2}$ for which the function $\psi\left(p\right)= C \cdot f_{H}\!\left(p\right)=\Phi_{1}+\Phi_{2}\cdot j$ is H-holomorphic. Let us recall the multiplication rule for quaternions in the Cayley–Dickson doubling form \cite[\!p.\!\!~42]{ks:hn} :
\begin{equation} \label{Eq12a}
 p\cdot q=\left(a_{1}+b_{1} \cdot j\right)\cdot\left(a_{2}+b_{2}\cdot j\right)=\left(a_{1}a_{2}-b_{1}\overline {b_{2}}\right)+\left(a_{1}b_{2}+\overline {a_{2}}\,b_{1}\right)\cdot j,
\end{equation} 
 where 
 \begin{align*}
&p=a_{1}+b_{1} \cdot j,\,\,\,\,\,\,\,\,q=a_{2}+b_{2} \cdot j,\\
&a_{1}=x_{1}+y_{1} i,\,\,\,\,\,\,\,\,\,\,b_{1}=z_{1}+u_{1} i,\\
&a_{2}=x_{2}+y_{2} i,\,\,\,\,\,\,\,\,\,\,b_{2}=z_{2}+u_{2} i.
\end{align*}

Using this rule, we obtain as follows:
\begin{equation*}
\psi\left(p\right)=C \cdot f_{H}\!\left(p\right) =\left(C_{1}+C_{2} \cdot j\right)\cdot\left(f_{1}+f_{2}\cdot j\right)=\left(C_{1}f_{1}-C_{2}\overline {f_{2}}\right)+\left(C_{1}f_{2}+C_{2}\overline {f_{1}}\right)\cdot j,
\end{equation*} 
whence
\begin{equation}
\label{Eq12} \Phi_{1}=C_{1}f_{1}-C_{2}\overline {f_{2}},\,\,\,\,\,\,\,\Phi_{2}=C_{1}f_{2}+C_{2}\overline {f_{1}}
\end{equation} 
and correspondingly
\begin{equation}
\label{Eq13} \overline{\Phi_{1}}=\overline{C_{1}}\, \overline{f_{1}}-\overline{C_{2}} f_{2},\,\,\,\,\,\,\,\overline{\Phi_{2}}=\overline{C_{1}}\,\overline{f_{2}}+\overline{C_{2}}f_{1}.
\end{equation}

To prove that a constant can be only a real value, it  suffices to use (\ref{Eq11}~-a). 
 Differentiating $\Phi_{2}$ with respect to $b$ and $\overline{\Phi_{2}}$ with respect to $\overline{b},$ and substituting the obtained derivatives into (\ref{Eq11}-a) we get the following equation: 
$$\partial_{b}\Phi_{2}=C_{1}\partial_{b}f_{2}+C_{2}\partial_{b}\overline{ f_{1}}=\partial_{\overline{b}}\overline{\Phi_{2}}=\overline{ C_{1}}\partial_{\overline{b}}\overline{f_{2}}+\overline{ C_{2}}\partial_{\overline{b}}f_{1},$$
 that must be satisfied if the function $\psi\left(p\right)= C \cdot f_{H}\!\left(p\right)=\Phi_{1}+\Phi_{2}\cdot j$ must be H-holomorphic. Since $f_{H}\!\left(p\right)$ is H-holomorphic, the equality $\partial_{b}f_{2}=\partial_{\overline{b}}\overline{f_{2}}$ holds true in this condition, and hence the equality $C_{1} \partial_{b}f_{2}=\overline{C_{1}} \partial_{\overline{b}}\overline{f_{2}}$ can only be satisfied if $C_{1}=\overline{C_{1}}.$

The derivative $\partial_{b}\overline{ f_{1}}$ does not belong to equations \!\! (\ref{Eq10}) when using them for the H-holomorphic function $f_{H}\!\left(p\right))=f_{1}+f_{2}\cdot j$, hence the condition $\partial_{b}\overline{ f_{1}}=\partial_{\overline{b}}f_{1}$ cannot be in general satisfied. In this case the equality $C_{2} \partial_{b}\overline{ f_{1}}=\overline{C_{2}} \partial_{\overline{b}}f_{1}$ can only be satisfied if $C_{2}=\overline{C_{2}}=0.$ 

It is not difficult to show that the function $\psi\left(p\right)= C \cdot f_{H}\!\left(p\right)$ satisfies all equations of system (\ref{Eq3}), if $C_{1}=\overline{C_{1}}$ and $C_{2}=0.$ \,For example, consider equation (\ref{Eq3}-2). If this equation  must be satisfied, then, differentiating the functions   $\Phi_{2}$ and $\overline{\Phi_{1}}$ defined, respectively, by (\ref{Eq12}) and (\ref{Eq13}),  we get the following equation:
\begin{equation*}
(\partial_{a}\Phi_{2}\!\!\mid=C_{1}(\partial_{a}f_{2}\!\!\mid+C_{2}(\partial_{a}\overline{f_{1}}\!\!\mid=-(\partial_{\overline{b}}\overline{\Phi_{1}}\!\!\mid=-\overline{C_{1}} (\partial_{\overline{b}}\overline{f_{1}}\!\!\mid+\overline{C_{2}}(\partial_{\overline{b}}f_{2}\!\!\mid,
\end{equation*}
which must be satisfied as well. Since, in accordance with equation (\ref{Eq10}-2), we have $(\partial_{a}f_{2}\!\!\mid=-(\partial_{\overline{b}}\overline{f_{1}}\!\!\mid,$ the partial equality $C_{1}(\partial_{a}f_{2}\!\!\mid=-\overline{C_{1}}(\partial_{\overline{b}}\overline{f_{1}}\!\!\mid$ will be satisfied only if $C_{1}= \overline{C_{1}}.$ On the other hand, since the function $(\partial_{a}\overline{f_{1}}\!\!\mid$ doesn't belong to system (\ref{Eq10}), the equality $(\partial_{a}\overline{f_{1}}\!\!\mid=(\partial_{\overline{b}}f_{2}\!\!\mid$ cannot be satisfied when using system  (\ref{Eq10}). We conclude that the equality $ C_{2}(\partial_{a}\overline{f_{1}}\!\!\!\mid= \overline{C_{2}}(\partial_{\overline{b}}f_{2}\!\!\!\mid$ can be satisfied only if $C_{2}=0.$

Thus, the function $\psi\left(p\right)=C\cdot f_{H}\!\left(p\right)$ satisfies equation  (\ref{Eq3}-2) only if $C_{1}=\overline{C_{1}}=r\in~\mathbb{R}$ and $C_{2}=0,$ i.e. $C=r\in\mathbb{R}.$ Quite analogously we can verify the validity of the remaining H-holomorphy equations of system  (\ref{Eq3}) if $C_{1}=\overline{C_{1}}=r\in\mathbb{R}$ and $C_{2}=0.$ The same results we obtain for H-holomorphic functions originally multiplied  on the right by a constant $C.$

We finally conclude that in order to get a H-holomorphic product of a H-holomorphic function by a constant, the constant can be only real-valued.  Using Corollary  \ref{co1.4}, further, we  prove expression (\ref{Eq9})\!. 
\end{proof}

\begin{Proposition} \label{pr2.2}
(H-holomorphic linear combinations) A sum of a finite number $n$ of H-holomorphic functions $f_{l}\!\left(p\right)$ each multiplied by a real constant $r_{l}:$
 \begin{equation}
\label{Eq14} \psi_{H}\!\left(p\right)=\sum_{l=0}^{n} r_{l}\,f_{l}\!\left(p\right)\!,
\end{equation}
where $\mid \!r_{1}\!\!\mid^{2}+\mid\! r_{2}\!\!\mid^{2}+\cdots +\mid\! r_{n}\mid^{2}\neq0,$
is H-holomorphic as well. The full $k'th$ order H-holomorphic derivative of $\psi_{H}\!\left(p\right)$ is the following:
 \begin{equation*}
\psi_H^{(k)}\!\left(p\right)= \left(\sum_{l=0}^{n} r_{l}\,f_{l}\!\left(p\right)\right)^{\!\!\!\left(k\right)}\!=\sum_{l=0}^{n} r_{l}\,f_l^{\left(k\right)}\left(p\right).
\end{equation*}
\end{Proposition}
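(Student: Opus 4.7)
The plan is to bootstrap from Proposition \ref{pr2.1} and reduce the statement to showing that the sum of two H-holomorphic functions is H-holomorphic, then iterate. By Proposition \ref{pr2.1}, each summand $r_l f_l(p)$ is itself H-holomorphic (the side condition $|r_1|^2+\cdots+|r_n|^2 \neq 0$ merely excludes the trivial zero function). Hence it suffices to prove the additive claim: if $g_H(p) = G_1 + G_2 \cdot j$ and $h_H(p) = H_1 + H_2 \cdot j$ are H-holomorphic in $G_4$, then so is $g_H(p) + h_H(p)$.

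The key observation is that the Cayley--Dickson decomposition is additive, so
\[
 g_H(p) + h_H(p) = (G_1 + H_1) + (G_2 + H_2) \cdot j,
\]
i.e. the constituents of the sum are the sums of the constituents. Because partial differentiation with respect to $a,\overline{a},b,\overline{b}$ is linear and complex conjugation is additive, each of the four H-holomorphy equations in \eqref{Eq3}, applied to $\Phi_1 := G_1 + H_1$ and $\Phi_2 := G_2 + H_2$, is just the sum of the corresponding equations already satisfied by the pair $(G_1,G_2)$ and the pair $(H_1,H_2)$. The condition $a=\overline{a}=x$ is imposed after differentiation in both cases alike, so taking sums respects the restriction denoted by the brackets $(\,\cdots\!\mid\,$. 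Continuity of the first-order partials of $G_1 + H_1$ and $G_2 + H_2$ follows from continuity of those of $G_1,G_2,H_1,H_2$. Therefore $g_H + h_H$ satisfies Definition of H-holomorphy in $G_4$. An elementary induction on $n$ then promotes this from two summands to any finite number, and combined with Proposition \ref{pr2.1} yields that $\psi_H(p) = \sum_{l=0}^{n} r_l f_l(p)$ is H-holomorphic.

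For the derivative formula, I would argue by induction on $k$, invoking Theorem \ref{th1.3} at each stage to ensure that the object being differentiated at the next step is again H-holomorphic. For $k=1$, formula \eqref{Eq5} expresses $\Phi_1^{'}$ and $\Phi_2^{'}$ as sums of partial derivatives of the constituents; by additivity of $\partial_a$ and $\partial_{\overline{a}}$ applied to the constituents $\sum_l r_l (f_l)_1$ and $\sum_l r_l (f_l)_2$, the real constants $r_l$ factor out and one obtains $\psi_H^{'}(p) = \sum_{l=0}^{n} r_l f_l^{'}(p)$. Assuming the formula for the $(k-1)$'st derivative, applying \eqref{Eq7} and using the same linearity gives it for the $k$'th. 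Alternatively, one can appeal directly to Corollary \ref{co1.4}: since the C-holomorphic linear combination $\sum_l r_l \psi_{C,l}(\xi)$ has $k$'th derivative $\sum_l r_l \psi_{C,l}^{(k)}(\xi)$, the H-representation inherits exactly the same form.

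There is no serious obstacle here; the proof is a linearity argument. The only mild subtlety worth flagging is that the linearity used in the constants $r_l$ is the real linearity coming from Proposition \ref{pr2.1}: the same reduction would fail with complex or quaternionic constants, since equations \eqref{Eq3} involve complex conjugation of the constituents, and that operation is only $\mathbb{R}$-linear on the coefficients. Once this is noted, the induction goes through uneventfully.
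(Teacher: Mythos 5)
Your proof is correct and follows essentially the same route as the paper, whose entire argument is the one-line remark that the result follows from Proposition \ref{pr2.1}, linearity of equations \eqref{Eq3}, and Corollary \ref{co1.4}. You have simply spelled out the linearity argument (additivity of the Cayley--Dickson constituents and of the partial derivatives, plus the induction for the $k$'th derivative) that the paper leaves implicit.
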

\begin{proof}
It follows from Proposition \ref{pr2.1},  linearity of equations (\ref{Eq3}) and Corollary  \ref{co1.4}\,. 
\end{proof}

\section{Power  series expansions of H-holomorphic functions} 
The algebraic properties of  addition, subtraction, multiplication, and division of H-holomorphic functions are fully identical (isomorphic) with complex ones.  This is based on the established facts that the quaternionic multiplication of the H-holomorphic functions behaves as commutative (see the proof in  \cite[\!p.\!\!~18]{pm:eac}) and the left quotient of them is equal to the right one  (see the proof in  \cite[p.20]{pm:eac}).
As a result of algebraic operations with H-holomorphic functions we also get H-holomorphic functions \cite[\!p.\!\!~17]{pm:eac}.

It is evident that due to these facts,  the commutative and associative laws for addition and multiplication, and the distributive laws  in the case of the  H-holomorphic functions are the sames as in the case of  C-holomorphic functions.   Note, however, that this  only applies to holomorphic functions $\psi_{H}\!\left(p\right)$ and $\psi_{C}\!\left(\xi\right)$. \textit{Thus, one can speak of the isomorphism between the algebras of H-holomorphic and C-holomorphic functions.} The differentiation rules for  H-holomorphic functions are also fully similar to complex ones \cite[\!p.\!\!~17]{pm:eac} and results of differentiating are also H-holomorphic.

Formula (\ref{Eq12a})  for quaternionic mulniplication  gives as a particular case  the formula  for complex multiplication  (see \cite[\!p.\!~18]{pm:eac}) after the transition to the complex case, i.e. at $a_{1}=x_{1}\left(y_{1}=0\right),b_{1}=z_{1}\left(u_{1}=0\right),a_{2}=\overline{a_{2}}=x_{2}\left(y_{2}=0\right),
b_{2}=\overline{b_{2}}=z_{2}\left(u_{2}=0\right)$.  Quite analogously  we  get the complex Cauchy-Riemann equations as a particular case  \cite[\!p.\!~8]{pm:onno} of the  necessary and sufficient conditions (\ref{Eq3}) for a quaternionic function to be H-holomorphic.  Given these facts, we can formulate the following
\begin{Proposition} \label{pr2.3}
We assume that there exists in principle one holistic  notion of a holomorphic function that in the case of quaternions is represented by a H-holomorphic function and in the case of complex variables by a C-holomorphic function.  Replacement of a complex variable  $\xi$ by a quaternionic $p$  carries out in accordance with Theorem \ref{th1.2} the transition from the complex representation of a holomorphic function in a set $G_{2}$ to its quaternionic representation  in a set $G_{4}$. Correspondingly, we can speak of a  C-representation and  a H-representation of one  holistic notion of holomorphic functions. 
\end{Proposition}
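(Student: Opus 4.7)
The plan is to treat Proposition~\ref{pr2.3} as a synthesis of the structural results already in hand rather than as an independent computational statement. Its substance is that the assignment $\psi_C(\xi)\leftrightarrow\psi_H(p)$ is a well-defined, kind-preserving correspondence between two classes of functions, and that this correspondence respects both algebra and differentiation. Consequently, the proof will consist of verifying three compatibilities: (i) a bidirectional passage between representations, (ii) transfer of the algebraic structure, and (iii) transfer of the differential structure.

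First I would use Theorem~\ref{th1.2} to supply the forward map $\psi_C(\xi)\mapsto\psi_H(p)$ on $G_4\supset G_2$ via the formal substitution $\xi\to p$, which by hypothesis preserves the kind of the function and carries the C-holomorphy of $\psi_C$ on $G_2$ to the H-holomorphy of $\psi_H$ on $G_4$. For the reverse direction I would use the specialization noted just before the statement: formula~(\ref{Eq12a}) for the quaternionic product collapses to ordinary complex multiplication when $y_1=u_1=y_2=u_2=0$, and system~(\ref{Eq3}) reduces under the same specialization to the classical Cauchy--Riemann equations. This shows that restriction to the complex plane is the functional inverse of the substitution $\xi\to p$ on the class under discussion.

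Next I would invoke the algebraic remarks opening Section~3 (commutativity of the quaternionic product on the H-holomorphic class, coincidence of left and right quotients, closure under the four arithmetic operations) to conclude that every algebraic identity valid in the C-representation transfers verbatim to the H-representation and conversely after specialization, giving the advertised algebra isomorphism. Corollary~\ref{co1.4} then supplies the differential half of the compatibility: an expression obtained by differentiating $\psi_C(\xi)$ has the same symbolic form as the expression obtained by differentiating $\psi_H(p)$, so the correspondence is stable under the natural operations of analysis.

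The main obstacle, in my view, is conceptual rather than technical: Proposition~\ref{pr2.3} is closer to a foundational postulate than to a theorem, and the wording ``we assume that there exists in principle'' acknowledges this. The delicate point to argue carefully is that the specialization inverse is well-defined, i.e.\ that no genuinely four-dimensional H-holomorphic object (one not arising from an $\xi\to p$ substitution) sneaks into the correspondence; this is exactly what the ``same kind'' hypothesis of Theorem~\ref{th1.2} rules out. Once this is made explicit, the holistic notion of a holomorphic function together with its two representations follows as the natural reading of Theorems~\ref{th1.2}, \ref{th1.3}, and Corollary~\ref{co1.4} taken in concert.
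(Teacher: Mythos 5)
Your proposal is correct and takes essentially the same route as the paper, which in fact supplies no formal proof of Proposition~\ref{pr2.3} at all: the proposition is stated without a proof environment and is justified only by the immediately preceding observations that quaternionic multiplication (\ref{Eq12a}) and the H-holomorphy system (\ref{Eq3}) specialize to complex multiplication and the classical Cauchy--Riemann equations, combined with Theorem~\ref{th1.2} --- precisely the ingredients you assemble. Your remark that the statement is a foundational postulate rather than a theorem (as signalled by ``we assume that there exists in principle'') is exactly the right reading.
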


 According to Theorem \ref{th1.2}, the quaternionic power functions $p^{l},\,l=0,1,2,\dots,$ and their derivatives of all orders are H-holomorphic. Putting $f_{l}\left(p\right)=p^{l}$ in (\ref{Eq14}), we obtain the following H-holomorphic polynomial:
 \begin{equation}\label{Eq15} 
\psi_{H}\left(p\right)=S_{n}\left(p\right)=\sum_{l=0}^{n}r_{l}p^{l},\,n=1,2,3,\dots,\,\,\,\,\,\,r_{l}\in\mathbb{R},\,p\in\mathbb{H}.
\end{equation}

It is natural to introduce an object that is like a H-holomorphic polynomial, but with infinitely many terms. Continuing (\ref{Eq15}) by increasing the $n,$ we get the following quaternionic power series:
 \begin{equation} \label{Eq16} 
\psi\left(p\right)=S\left(p\right)=\sum_{l=0}^{\infty}r_{l}p^{l}=r_{0}+r_{1}p^{1}+r_{2}p^{2}+\cdots+r_{l}p^{l}+\cdots,\,\,\,\,\,\,r_{l}\in\mathbb{R},\,p\in\mathbb{H},  
\end{equation}
which is a Taylor series $ \sum_{l=0}^{\infty}r_{l}\left(p-p_{0}\right)^{l}$ at $ p_{0}=0$ or a Maclaurin  series \cite[\!p.\!\!~353]{pm:hm}  for the function $\psi\left(p\right)$. 

Since all algebraic operations and differentiating rules for H-holomorphic power functions are identical with those in complex analysis, we can introduce notions related to quaternionic power series  similarly to complex ones.  Given Proposition \ref{pr2.3}, it is not unreasonable to expect that if a C-representation of a holomorphic function by a convergent power series exists, then a H-representation of this holomorphic function by a convergent power series must also exist.  However, we would like to verify this directly, and therefore we need first to adapt some complex definitions and convergence criteria to the quaternionic case.  This is also important because we want to show a consistency between complex and quaternionic power series.
 
 \textit{As in complex analysis} \cite{pm:hm}, \textit{a notion of an infinite quaternionic series $\sum_{l=0}^{\infty}r_{l}p^{l}$ is based on a notion of a convergent sequence $\left(S_{n}\left(p\right)\right)$ of partial sums $S_{n}\left(p\right)=\sum_{k=0}^n r_{k}p^{k},\\n=0,1,2,\dots$}.

By analogy with \cite[\!p.\!\!~99]{mh:ca} we introduce the following
\begin{Definition}
\textit{An infinite quaternionic series  $\sum_{l=0}^{\infty}r_{l}p^{l}$ is said to be convergent if the sequence $\left(S_{n}\left(p\right)\right)$ of the partial sums $S_{n}\left(p\right)=\sum_{l=0}^{n} r_{l}\,p^{l}$ is convergent. At that, $\lim_{n \rightarrow {\infty}}
S_{n}\left(p\right)=S\left(p\right)$ is said to be a sum $S\left(p\right)$ of this infinite series and $r_{l}p^{l}$ its term. Otherwise the series is divergent}. A power series $\sum_{l=0}^{\infty}r_{l}p^{l}$ is said to be \textit{absolutely convergent} if a series $\sum_{l=0}^{\infty}\mid \!\!r_{l}\,p^{l}\!\!\mid\,=\!\sum_{l=0}^{\infty} \!r_{l}\! \mid \!\!p\!\!\mid^{l},$  where $\mid \!\!p\!\!\mid=\sqrt{x^{2}+y^{2}+z^{2}+u^{2}}=\sqrt{a\overline{a}+b\overline{b}},$ converges. A power series $\sum_{l=0}^{\infty}r_{l}p^{l}$ is said to be  \textit{conditionally convergent} if it converges but not absolutely.  
\end{Definition}

Analogously to complex analysis \cite[\!p.\!\!~209]{mh:ca} we also introduce the following
\begin{Definition}
A quaternionic series $\sum_{l=0}^{\infty}r_{l}p^{l}$ is said to be uniformly convergent to its sum $S\left(p\right)$ on the set $T\!\subset\mathbb{H}$ if it converges at all $p\in T$ to  $S\left(p\right)$, and for every $\epsilon>0$ there exists an integer $N_{\epsilon}>0$ (that depends only on $\epsilon$) such that if $n\geq N_{\epsilon}$, then $\mid S_{n}\left(p\right)-S\left(p\right)\mid<\epsilon$ \textit{for all} $p\in T.$ 
\end{Definition}
We generalize the complex definition of Cauchy criterion \cite[\!p.\!\!~250]{pm:hm}  to the quaternionic case as follows.
\begin{Definition} \label{def3.3}
(a quaternionic Cauchy criterion)
A quaternionic series $\sum_{l=0}^{\infty} r_{l}\,p^{l}$ converges uniformly to $S\left(p\right)$ on $T$  if and only if for every $\epsilon >0$ there is an integer $N_{\epsilon}>0$ such that $\mid \!S_{n}\left(p\right)-S_{m}\left(p\right)\!\mid <\epsilon$ for all $n,m >N_{\epsilon}$ and for  \textit{all} $p\in T.$
\end{Definition}
We associate every quaternionic power series with a real number $R\geq0$ called  \textit{a radius of convergence of a series} \cite[\!p.\!\!~110]{mh:ca}. We further define a quaternionic generalization of a complex domain of convergence as  an open connected ball $B\left(0,R\right)=\left\{p\!:\mid \!p\mid<R \right\}\subset\mathbb{H}$ centered at $p=0$ such that $\psi\left(p\right):B\left(0,R\right)\rightarrow \mathbb{H},$ where $R$ is a radius of convergrnce. For simplicity, we do not consider functions with  singularities. 

Further, we reproduce some assertions and theorems from complex analysis \cite{mh:ca}, \cite{sh:ica}, \cite{pm:hm} as \textquotedblleft propositions\textquotedblright \,  adapted to quaternions. At that, we repeat complex proofs, adapting them to the quaternionic case. We begin by the following important property of convergent series.
\begin{Proposition} \label{pr3.4}  
(a term test). The only series $\sum_{l=0}^{\infty} r_{l}\,p^{l}$ that can converge are those ones whose terms approach $0.$ In other words, if $\sum_{l=0}^{\infty} r_{l}\,p^{l}$ converges, then $r_{l}\,p^{l}\rightarrow 0$ as $l\rightarrow{\infty}.$
\end{Proposition}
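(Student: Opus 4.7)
The plan is to derive the conclusion directly from Definition 3.2, which identifies convergence of the series $\sum_{l=0}^{\infty} r_{l} p^{l}$ with convergence of the sequence $\left(S_{n}(p)\right)$ of partial sums in $\mathbb{H}$. Writing the $l$-th term as the telescoping difference $r_{l} p^{l} = S_{l}(p) - S_{l-1}(p)$ for $l \geq 1$ reduces the statement to the elementary fact that consecutive differences of a convergent sequence tend to zero.

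Concretely, I would first fix a point $p \in \mathbb{H}$ at which the series converges and set $S(p) := \lim_{n \to \infty} S_{n}(p)$, which exists by hypothesis. The shifted sequence $\left(S_{l-1}(p)\right)_{l \geq 1}$ is just the original sequence re-indexed and hence also converges to $S(p)$. The quaternionic modulus $\mid p \mid = \sqrt{a \overline{a} + b \overline{b}}$ from Definition 3.2 turns $\mathbb{H}$ into a normed space isometric to $\mathbb{R}^{4}$, so the standard limit-of-difference rule applies and yields $r_{l} p^{l} = S_{l}(p) - S_{l-1}(p) \longrightarrow S(p) - S(p) = 0$ as $l \to \infty$.

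There is essentially no obstacle here: the argument copies the classical real or complex proof verbatim once Definition 3.2 is in place, the only new ingredient being that the triangle inequality, reverse triangle inequality, and completeness needed to justify the limit operations transfer from $\mathbb{R}^{4}$ to $\mathbb{H}$ through the norm $\mid \cdot \mid$. If a self-contained $\epsilon$-style formulation is preferred, one can alternatively invoke the Cauchy criterion of Definition 3.3 applied to the singleton $T = \{p\}$: given $\epsilon > 0$, choose $N_{\epsilon}$ with $\mid S_{n}(p) - S_{m}(p) \mid < \epsilon$ for all $n, m > N_{\epsilon}$, and then specialize to $n = l$, $m = l-1$ with $l > N_{\epsilon} + 1$ to obtain $\mid r_{l} p^{l} \mid < \epsilon$. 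Either route gives $r_{l} p^{l} \to 0$, completing the proof.
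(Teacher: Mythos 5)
Your proposal is correct and follows essentially the same route as the paper: both write $r_l p^l = S_l(p) - S_{l-1}(p)$ and pass to the limit $S - S = 0$ using convergence of the partial sums. Your added remarks on the norm on $\mathbb{H}$ and the alternative via the Cauchy criterion are fine but not needed beyond what the paper does.
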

\begin{proof}
If the series  $\sum_{l=0}^{\infty} r_{l}\,p^{l}$ converges in $B\left(0,R\right)$, then the limit of the sequence of its partial sums approaches the sum $S$, i.e. $S_{n}\rightarrow S$ as $n\rightarrow {\infty}$, where $S_{n}$ is the $n'th$ partial sum $S_{n}=\sum_{k=0}^{n} r_{k}\,p^{k}$.  Putting $n=l$, we have $$\lim_{l \rightarrow \infty}r_{l}\,p^{l}=\lim_{l \rightarrow \infty}\left(S_{l}-S_{l-1}\right)=\lim_{l \rightarrow \infty}S_{l}-\lim_{l \rightarrow \infty}S_{l-1}=S-S=0.$$ 
\end{proof}
The contrapositive of that statement gives a test which can tell us that some series diverge. Note, however, the terms converging to $0$ don't imply the series converges. The term test is only necessary, but not a sufficient convergence test.

Analogously to \cite[\!p.\!\!~105]{mh:ca} we introduce the following
\begin{Proposition} \label{pr3.5}
(d'Alembert's Ratio Test). If the series $\sum_{l=0}^{\infty} r_{l}\,p^{l}$ is a quaternionic power series with the property that 
\begin{equation} \label{Eq18d}
\lim_{l \rightarrow \infty}\frac{\mid\! r_{\left(l+1\right)}\,p^{{l+1}} \!\mid }{\mid\! r_{l}\,p^{l} \!\mid}=L
\end{equation} 
(provided the limit exists), then the series is absolutely convergent in an open connected ball $B\left(0,\frac{1}{L}\right)$ if $L<1$ and divergent if $L>1$. If $L=1$, then the test is inconclusive, so we have to use some other test. If the series is convergent, then $R=\frac{1}{L}$ (possibly infinite) is a radius of convergence.
\end{Proposition}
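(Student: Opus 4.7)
The plan is to reduce the quaternionic ratio test to the classical real-variable ratio test applied to the series of moduli $\sum_{l=0}^\infty |r_l|\,|p|^l$, and then to lift absolute convergence to ordinary convergence of $\sum r_l\, p^l$ by invoking the Cauchy criterion (Definition~\ref{def3.3}) together with completeness of $\mathbb{H}$.

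First I would observe that because the quaternion modulus is multiplicative and each coefficient $r_l$ is real, one has $|r_l\, p^l| = |r_l|\,|p|^l$, so the series of moduli is a genuine real power series in $|p|$, and the hypothesis (\ref{Eq18d}) factors as $L = |p|\, L_0$, where $L_0 := \lim_{l\to\infty} |r_{l+1}|/|r_l|$. Thus $L < 1$ is equivalent to $|p| < 1/L_0$, and the open ball of absolute convergence is $B(0, 1/L_0)$, identifying the radius as $R = 1/L_0$. For absolute convergence itself, I would pick $\rho$ with $L < \rho < 1$, use the definition of the limit to produce an $N$ with $|r_{l+1}\, p^{l+1}| < \rho\, |r_l\, p^l|$ for all $l \geq N$, and iterate to obtain $|r_l\, p^l| \leq |r_N\, p^N|\,\rho^{l-N}$. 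The tail is then majorised by a convergent geometric series, yielding absolute convergence throughout $B(0, 1/L_0)$.

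Upgrading absolute to ordinary convergence is where Definition~\ref{def3.3} enters: the triangle inequality in $\mathbb{H}$ gives $|S_n(p) - S_m(p)| \leq \sum_{l=m+1}^n |r_l\, p^l|$, and absolute convergence forces this below any prescribed $\epsilon$ for large $n, m$, so $(S_n(p))$ is Cauchy in $\mathbb{H}$; by completeness it converges. For the case $L > 1$, I would choose $\rho \in (1, L)$, apply the reversed ratio bound $|r_l\, p^l| \geq |r_N\, p^N|\,\rho^{l-N}$ to conclude $|r_l\, p^l| \to \infty$, and then invoke Proposition~\ref{pr3.4}: since $r_l\, p^l \not\to 0$, the series diverges. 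The case $L = 1$ requires no argument since the proposition declares it inconclusive.

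The only subtlety, rather than a genuine obstacle, is the absolute-to-ordinary step: unlike in $\mathbb{R}$ one cannot rearrange into positive and negative parts, so one must genuinely pass through the Cauchy criterion and completeness of $\mathbb{H}$ viewed as $\mathbb{R}^4$. Everything else is a line-by-line adaptation of the classical complex proof, made legitimate by the multiplicativity of the quaternion modulus and the realness of the coefficients $r_l$ guaranteed by Proposition~\ref{pr2.1}.
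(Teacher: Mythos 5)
Your proof is correct and follows the same route the paper intends: reduce to the classical real ratio test for the series of moduli $\sum |r_l|\,|p|^l$ (legitimate by multiplicativity of the quaternion norm), majorise the tail by a geometric series, and pass from absolute to ordinary convergence via the Cauchy criterion and completeness of $\mathbb{H}$, with the term test handling $L>1$. The paper itself supplies no details at all --- it merely remarks that the argument is ``quite analogous'' to the complex case and cites Apostol --- so your write-up actually provides the argument the paper only gestures at, and it correctly pins down the point the statement leaves ambiguous, namely that $L=|p|\,L_0$ depends on $p$ and the convergence radius is really $1/L_0$ with $L_0=\lim_{l\to\infty}|r_{l+1}|/|r_l|$.
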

\begin{proof}
Since we only deal with positive values in (\ref{Eq18d}), the proof of this proposition is quite analogous to complex one \cite[\!p.\!~193]{ap:tm}. We skip the details.
\end{proof}

The Weierstrass M-test, given in \cite[\!p.\!\!~210]{mh:ca}, can be generalized to the quaternionic case as follows.
\begin{Proposition} \label{pr3.6} 
(a quaternionic Weierstrass M-test). 
Suppose the infinite quaternionic series $\sum_{l=0}^{\infty} r_{l}\,p^{l}$ has the property 
\begin{equation*} 
\mid \!\!r_{l}\,p^{l} \!\!\mid \leq M_{l},\,\,\,M_{l}\geq0
\end{equation*}
for each $l\in\left\{0,1,2, \dots\right\}$ and all $p\in T.$  If the series $\sum_{l=0}^{\infty} M_{l}$ converges, then the series $\sum_{l=0}^{\infty} r_{l}\,p^{l}$ converges uniformly and absolutely on $T.$
\end{Proposition}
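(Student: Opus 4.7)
The plan is to follow the classical complex Weierstrass M-test argument verbatim, replacing the complex modulus by the quaternionic one $|p|=\sqrt{a\overline{a}+b\overline{b}}$ and the complex Cauchy criterion by its quaternionic counterpart (Definition \ref{def3.3}). The only facts from the quaternionic structure that I will actually use are the triangle inequality $|q_1+q_2|\le|q_1|+|q_2|$ on $\mathbb{H}$ and the multiplicativity $|r_l\,p^l|=|r_l|\,|p|^l$; everything else reduces to a comparison between non-negative real series.

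First I would establish uniform convergence via Definition \ref{def3.3}. Given $\epsilon>0$, since $\sum_{l=0}^{\infty}M_l$ is a convergent series of non-negative reals, the ordinary real Cauchy criterion supplies an integer $N_\epsilon>0$ such that $\sum_{l=m+1}^{n}M_l<\epsilon$ for all $n>m\ge N_\epsilon$. Then for every $p\in T$, using the triangle inequality on $\mathbb{H}$ together with the hypothesis $|r_l\,p^l|\le M_l$,
\begin{equation*}
|S_n(p)-S_m(p)|=\left|\sum_{l=m+1}^{n}r_l\,p^l\right|\le\sum_{l=m+1}^{n}|r_l\,p^l|\le\sum_{l=m+1}^{n}M_l<\epsilon.
\end{equation*}
Because this bound depends only on $\epsilon$ and not on $p$, the condition of Definition \ref{def3.3} is met uniformly on $T$, so the partial sums $S_n(p)$ converge uniformly on $T$ to some limit $S(p)$.

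For absolute convergence on $T$ I would simply invoke the comparison test for non-negative real series: for each fixed $p\in T$ the partial sums of $\sum_{l=0}^{\infty}|r_l\,p^l|$ are monotone non-decreasing and bounded above by $\sum_{l=0}^{\infty}M_l<\infty$, hence they converge. Therefore $\sum_{l=0}^{\infty}r_l\,p^l$ is absolutely convergent at every $p\in T$, completing the proof.

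I do not anticipate any real obstacle, since the argument is a direct transcription of the classical complex one and both ingredients it needs — the triangle inequality for the quaternionic modulus and the quaternionic Cauchy criterion of Definition \ref{def3.3} — are already in hand. The one point worth stating explicitly is that the integer $N_\epsilon$ produced from $\sum M_l$ is independent of $p\in T$; this is exactly what upgrades the pointwise bound to the uniform one, and it is the only subtlety in an otherwise routine verification.
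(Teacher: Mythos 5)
Your proof is correct and follows essentially the same route as the paper: verify the quaternionic Cauchy criterion of Definition \ref{def3.3} uniformly in $p$ via the tail bound $\left|S_n(p)-S_m(p)\right|\le\sum_{l=m+1}^{n}M_l$, then get absolute convergence by comparison with $\sum M_l$. If anything, your version is slightly more careful, since you correctly invoke the triangle inequality where the paper writes an equality between $\left|\sum_{l=m+1}^{n}r_l\,p^l\right|$ and $\sum_{l=m+1}^{n}\left|r_l\,p^l\right|$.
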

\begin{proof}
Consider the sequence $\left\{S_{n}\left(p\right)\right\}$  of partial sums $S_{n}\left(p\right)=\sum_{l=0}^{n} r_{l}\,p^{l}$. If $n>m,$ where $n,m\in\mathbb{N}$, then $\mid~\!\!\!S_{n}\left(p\right)-S_{m}\left(p\right)\!\!\mid= \mid \!\!\!\!\!~\!\sum_{l=m+1}^{n}r_{l}\,p^{l}\!\!\mid=\sum_{l=m+1}^{n}\!\!\mid \!\!r_{l}\,p^{l} \!\!\mid \leq \sum_{l=m+1}^{n}M_{l}.$ Since the series $\sum_{l=0}^{\infty}M_{l}$ converges, the last expression can be made as small as we wish by choosing the positive integer $m$ large enough. Thus for a given value $\epsilon>0$ there is a positive integer  $N_{\epsilon}$ such that if $n,m > N_{\epsilon},$ then $\mid~\!\!\!S_{n}\left(p\right)-S_{m}\left(p\right)\!\!\mid < \epsilon.$  But this means that for all  $p\in T,$ according to Definition \ref{def3.3}, the series $\sum_{l=0}^{\infty} r_{l}\,p^{l}$ converges uniformly on a set $T$. Since the series $\sum_{l=0}^{\infty}\mid \! r_{l}\,p^{l}\!\mid$ is majorized by the convergent series $\sum_{l=0}^{\infty}M_{l}$, the series $\sum_{l=0}^{\infty} r_{l}\,p^{l}$  also converges absolutely on $T$.
\end{proof}
\begin{Proposition} \label{pr3.7} 
(terms bounded by a constant) If terms of a quaternionic power series $\sum_{l=0}^{\infty} r_{l}\,p^{l}$ are bounded at some point $p_{0}\in\mathbb{H}$:
 \begin{equation} \label{Eq17}
 \mid \!r_{l}p_0^l \mid\leq M,
\end{equation}
where $M>0$ is a constant and $l=0,1,2, \dots$, then this series converges in an open connected ball $B\left(0,\mid\! p_{0} \!\mid\right)=\left\{p:\,\mid \!p \!\mid < \mid p_{0} \!\mid \right\}$. Moreover, it converges absolutely and uniformly on any compact subset $K$ that is properly contained in $B\left(0,\mid\! p_{0} \mid\right)$.
\end{Proposition}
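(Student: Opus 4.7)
\smallskip
\noindent\textbf{Proof plan.} The statement is the quaternionic analogue of the classical Abel lemma, and the plan is to mimic the complex-analytic proof, using the multiplicativity of the quaternion modulus together with the quaternionic Weierstrass M-test (Proposition \ref{pr3.6}) as the convergence vehicle. The crucial algebraic fact is that $|p^{l}|=|p|^{l}$ for every $p\in\mathbb{H}$ and every $l\in\mathbb{N}$, which follows from the well-known multiplicativity of the norm on $\mathbb{H}$; combined with the hypothesis that the coefficients $r_{l}$ are real (so $|r_{l}p^{l}|=|r_{l}|\,|p|^{l}$), this reduces the majorant estimates to a real geometric series, exactly as in the complex case.

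First I would set $q=|p|/|p_{0}|$ for an arbitrary point $p$ with $|p|<|p_{0}|$, and write
\begin{equation*}
|r_{l}p^{l}|=|r_{l}|\,|p|^{l}=|r_{l}|\,|p_{0}|^{l}\,q^{l}=|r_{l}p_{0}^{l}|\,q^{l}\leq M q^{l}.
\end{equation*}
Since $0\leq q<1$, the real series $\sum_{l=0}^{\infty} M q^{l}$ is a convergent geometric series, so the comparison test already yields absolute (and hence ordinary) convergence of $\sum_{l=0}^{\infty} r_{l}p^{l}$ at the single point $p$. Repeating this for every $p\in B(0,|p_{0}|)$ gives pointwise convergence throughout the open ball, which establishes the first claim.

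Next, to obtain absolute and uniform convergence on an arbitrary compact subset $K$ properly contained in $B(0,|p_{0}|)$, I would use compactness to produce a number $r$ with $0\leq r<|p_{0}|$ such that $|p|\leq r$ for every $p\in K$ (take $r=\max_{p\in K}|p|$, which exists and is strictly less than $|p_{0}|$ because $K$ is compact and properly contained in the open ball). Setting $q_{0}=r/|p_{0}|<1$, the estimate above sharpens to
\begin{equation*}
|r_{l}p^{l}|\leq M\, q_{0}^{l}\qquad\text{for all }p\in K,
\end{equation*}
so the constants $M_{l}=M q_{0}^{l}$ provide a majorant independent of $p\in K$, with $\sum_{l=0}^{\infty} M_{l}$ convergent. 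The quaternionic Weierstrass M-test (Proposition \ref{pr3.6}) then delivers both absolute and uniform convergence of $\sum_{l=0}^{\infty} r_{l}p^{l}$ on $K$, completing the proof.

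I do not expect any serious obstacle; the only point that deserves attention is the legitimacy of the identity $|p^{l}|=|p|^{l}$ in $\mathbb{H}$ and the consequent clean factorisation $|r_{l}p^{l}|=|r_{l}p_{0}^{l}|(|p|/|p_{0}|)^{l}$, which is what makes the reduction to a geometric majorant go through exactly as in the complex case. Once that is in place, the remainder is a direct appeal to Proposition \ref{pr3.6}.
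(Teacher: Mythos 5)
Your proof is correct and follows essentially the same route as the paper's: both reduce to the geometric majorant $Mq^{l}$ via the multiplicativity of the quaternion norm and then invoke the quaternionic Weierstrass M-test (Proposition \ref{pr3.6}) on compact subsets. The only cosmetic difference is the order of the two claims — you prove pointwise convergence directly and then the uniform statement, while the paper proves the compact-subset statement first and derives pointwise convergence from it by nesting each point in a smaller closed ball.
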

\begin{proof}
We adapt  the proof considered in  \cite[\!p.\!~96]{sh:ica} to the quaternionic case. \!Suppose that $p_{0}\neq 0$ and \! $\mid \!p_{0}\!\!\mid=\zeta>0$. Otherwise the ball $B\left(0,\zeta\right)$ is empty. Let $K\in B\left(0,\zeta\right)=\left\{p:\,\mid \!p \mid <\zeta\right\}$, then there exists $q<1$ such that $\frac{\mid p\mid}{\zeta}\leq q<1$ for all $p\in K$. Therefore \textit{for any} $p\in K$ and \textit{any}  of the above $l$ we have $r_{l}\!\!\mid \!p^{l}\mid\leq r_{l}\,\zeta^{l}q^{l}$. However, inequality (\ref{Eq17}) implies that $\mid \!r_{l}p_0^l \mid=r_{l}\zeta^{l}\leq M$, then $r_{l}\!\!\mid \!p^{l}\mid\leq Mq^{l}$ and the series $\sum_{l=0}^{\infty} r_{l}\,p^{l}$ is majorized by the convergent series $M\sum_{l=0}^{\infty} q^{l}$ (the geometric series $\sum_{l=0}^{\infty} q^{l}\!\!, \,q<1$, is convergent) \textit{for all} $p\in K$. Therefore, according to Proposition \ref{pr3.6}, the series $\sum_{l=0}^{\infty} r_{l}\,p^{l}$ converges uniformly and absolutely on $K$. This proves the second statement of this proposition. The first statement follows from the second, since any given point $p^{'}\in B\left(0,\zeta \right)$ is contained in some ball $B\left(0,\zeta^{'} \right)=\left\{p\!:\mid \!p\mid<\zeta^{'}\right\}\!,\,\,\mid \!p^{'}\!\!\mid<\zeta^{'}<\zeta,$ belonging compactly to $B\left(0,\zeta \right)=B\left(0,\mid \!p_{0} \!\mid \right)$.
\end{proof}

The quaternionic sets $G_{4}$ can be regarded as compact, since  we assume that they are continuous and bounded; every sequence in them has a subsequence that converges to an element (a point)  contained again in them.

In complex analysis the following theorem (Weierstrass) \cite[\!p.\!\!~106]{sh:ica} holds true: \\
\textit{If the functional series}
 \begin{equation}  \label{Eq18a}
f\left(\xi\right)=\sum_{l=0}^{\infty}f_{l}\left(\xi\right)\!,\,\,\,\,\xi\in \mathbb{C}
\end{equation}
\textit{of  C-holomorphic functions} $f_{l}$ \,\textit{in an open connected domain}\, $D \left(\xi\right)$ \textit{converges  uniformly on any compact subset of this domain, then  the sum} $f\left(\xi\right)$ \textit{of this series is holomorphic in $D\left(\xi\right)$ and  the series can be differentiated termwise:}
 \begin{equation} \label{Eq19} 
f^{'}\left(\xi\right)=\sum_{l=0}^{\infty}f_l^{'}\left(\xi\right).
\end{equation}
\textit{Such a differentiation can be performed arbitrarily many times at any point} $\xi\in D\left(\xi\right)$.

Without loss of generality, we regard a domain $D\left(\xi\right)$ as a set $G_{2}$ (Theorem \ref{th1.2}) that is an open connected disk $D\left(0,R\right)=\left\{\xi:\mid \xi \mid<R \right\}$, where $R>0$ is a radius of convergence of the series (\ref{Eq18a}). Correspondingly, we also regard an open connected quaternionic ball $B\left(0,R\right)=\left\{p\!:\mid p\mid<R \right\}$ as a quaternionic generalization $G_{4}$ of a complex disk $D\left(0,R\right)$ such that $D\left(0,R\right)$ follows from $B\left(0,R\right)$ upon the transition from $p$ to $\xi$. 

We see that the requirements for the termwise differentiation in complex analysis are the following: (A) the functions $f_{l}\left(\xi\right)$ are C-holomorphic in $D\left(\xi\right)$; (B)  the series (\ref{Eq18a}) converges uniformly on any compact subset of $D\left(\xi\right)$. At that, we can compare radiuses of convergence of the series before and after differentiation. 
If analogous requirements will be fulfilled in the case of quaternionic functions and series, then we can apply Theorem \ref{th1.3} and Corollary \ref{co1.4} to prove the possibility of a termwise differentiability in the quaternionic case. Adapting all this to the quaternionic case we formulate  the following
\begin{Proposition} \label{pr3.8} 
(a termwise differentiation)
If the quaternionic power series
 \begin{equation}  \label{Eq18} 
f\!\left(p\right)=\sum_{l=0}^{\infty} r_{l}\,p^{l},\,\,\,\,p\in \mathbb{H},\,\,\,l=0,1,2,\dots
\end{equation}
of the H-holomorphic functions $r_{l}\,p^{l}$ in its open connected convergence ball $B\left(0,R\right)=\left\{p\!:\mid \!p\!\mid<R \right\}$ converges uniformly on any compact subset $T\in B\left(0,R\right)\!,$ then the sum $f\!\left(p\right)$ of this series is H-holomorphic in $B\left(0,R\right)$ and the series can be termwise differentiated at each point $p \in B\left(0,R\right)$:
\begin{equation} \label{Eq21} 
f^{'}\!\left(p\right)=\sum_{l=0}^{\infty}\left(r_{l}\,p^{l}\right)^{'}=\sum_{l=0}^{\infty}l\,r_{l}\,p^{l-1}=r_{1}+2r_{2}p+3r_{3}p^{2}+\cdots + lr_{l}p^{l-1}+\cdots .
\end{equation} 
Such a differentiation can be performed arbitrarily many times at any point $ p \in B\left(0,R\right)$.
\end{Proposition}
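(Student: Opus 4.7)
The plan is to piggyback on the complex Weierstrass theorem (just recalled above the statement) and transfer it via Theorem~\ref{th1.2}, Corollary~\ref{co1.4}, and Theorem~\ref{th1.3}. First I would verify that each individual term $r_l p^l$ is H-holomorphic on all of $\mathbb{H}$: the complex power $\xi^l$ is C-holomorphic, so by Theorem~\ref{th1.2} the quaternionic power $p^l$ is H-holomorphic, and by Proposition~\ref{pr2.1} multiplication by the real constant $r_l$ preserves H-holomorphy. Proposition~\ref{pr2.2} then gives that every partial sum $S_n(p) = \sum_{l=0}^n r_l p^l$ is H-holomorphic on $\mathbb{H}$, and the hypothesis of uniform convergence on every compact subset of $B(0,R)$ ensures that the limit function $f(p)$ is well-defined and continuous there.

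Next I would establish H-holomorphy of $f(p)$ by descending to the C-representation. By Theorem~\ref{th1.2} together with Proposition~\ref{pr2.3}, the substitution $p\to\xi$ sends $B(0,R)$ to the disk $D(0,R)$ and sends each $r_l p^l$ to the C-holomorphic $r_l \xi^l$; any compact $K\subset D(0,R)$ is also compact in $B(0,R)$, so the hypothesis yields uniform convergence of $\sum r_l \xi^l$ on every compact subset of $D(0,R)$. Applying the complex Weierstrass theorem quoted just above the proposition gives that the complex sum $\sum_{l=0}^{\infty} r_l \xi^l$ is C-holomorphic in $D(0,R)$ with termwise derivative $\sum_{l=0}^{\infty} l r_l \xi^{l-1}$. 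Theorem~\ref{th1.2} then lifts this back: $f(p)$ is H-holomorphic in $B(0,R)$, and by Corollary~\ref{co1.4} (which guarantees identical differentiation rules in the two representations, so that $(p^l)' = l p^{l-1}$) the derivative takes the form $f'(p) = \sum_{l=0}^{\infty} l r_l p^{l-1}$, which is precisely (\ref{Eq21}).

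For iteration to arbitrarily many orders I would appeal to Theorem~\ref{th1.3}, which guarantees that once $f(p)$ is H-holomorphic in $B(0,R)$ it possesses full quaternionic derivatives of all orders there, each itself H-holomorphic. The differentiated series $\sum l r_l p^{l-1}$ has the same radius of convergence $R$ as the original (from Proposition~\ref{pr3.5}, since $l^{1/l}\to 1$) and converges uniformly on every compact subset of $B(0,R)$ by the same compact-subset argument, so the hypotheses of the proposition are reproduced on the differentiated series and the termwise step may be iterated.

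The main obstacle is the rigorous translation of uniform convergence on compact subsets between the C- and H-representations. The forward direction is clean, because $\mathbb{C}$ sits inside $\mathbb{H}$ as the slice $y=z=u=0$ with matching norm, so compact subsets of $D(0,R)$ are a fortiori compact subsets of $B(0,R)$. The reverse direction relies on the guiding principle (formalized in Theorem~\ref{th1.2} and Proposition~\ref{pr2.3}) that a holomorphic function has a single holistic identity across its two representations, so that once its complex shadow is identified as C-holomorphic the full quaternionic function is automatically H-holomorphic; granted this principle, the remainder of the argument is essentially mechanical.
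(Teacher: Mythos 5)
Your proposal is correct and follows essentially the same route as the paper: both establish H-holomorphy of the sum by applying Theorem~\ref{th1.2} to the complex series $\sum_{l=0}^{\infty} r_{l}\xi^{l}$ in $D(0,R)$, obtain the termwise derivative formula from Corollary~\ref{co1.4}, and justify iteration via Theorem~\ref{th1.3}. The only difference is one of emphasis: the paper re-derives the uniform convergence on compact subsets and the majorization of the differentiated series directly in the quaternionic setting (using Propositions~\ref{pr3.4}, \ref{pr3.5}, \ref{pr3.6} and \ref{pr3.7}), whereas you delegate that analytic work to the complex Weierstrass theorem on the slice $y=z=u=0$; within the paper's transfer framework both versions carry the same content.
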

\begin{proof}
It is evident, according to Theorems \ref{th1.2}\,, \ref{th1.3} and Corollary \ref{co1.4}\,,  that the function $f_{l}\left(p\right)= r_{l}\,p^{l}$ and its first derivative $\left( r_{l}p^{l}\right)^{'}=lr_{l}\,p^{l-1}$ (also derivatives of all orders) are H-holomorphic in $B\left(0,R\right)$. Hence the requirement (A) of complex analysis is fulfilled in the quaternionic case. The H-holomorphy of $f\!\left(p\right)=\sum_{l=0}^{\infty} r_{l}\,p^{l}$ in a convergence ball $B\left(0,R\right)$ follows from Theorem \ref{th1.2} applied to the  C-holomorphic function $\psi_{C}\!\left(\xi\right)=\sum_{l=0}^{\infty} r_{l}\,\xi^{l}$ in its convergence disk $D\left(0,R\right)$. We now prove that $f\!\left(p\right)=\sum_{l=0}^{\infty} r_{l}p^{l}$ converges uniformly on any compact subset $T\subset B\left(0,R\right)$. Take any $p\in B\left(0,R\right)$ and consider any positive $\zeta$ such that $\mid \! p\! \mid< \zeta=\mid\! p_{0}\!\mid<R$. By virtue of the convergence of the series
\begin{equation*}
\sum_{l=0}^{\infty} r_{l}\,\zeta^{l}=r_{0}+r_{1}\zeta+r_{2}\zeta^{2}+\cdots + r_{l}\zeta^{l}+\cdots,\,\,\,\,\,r_{l},\zeta\in\mathbb{R},
\end{equation*} 
and in accordance with  Proposition \ref{pr3.4} we have $r_{l}\,\zeta^{l}\rightarrow 0$ as $l\rightarrow{\infty}.$ Then there exists a big enough constant $M>0$ such that the general term of this series is bounded above:
\begin{equation*}
r_{l}\zeta^{l}\leq M,\,\,\,l=0,1,2,\dots. 
\end{equation*}
 We have $r_{l}\!\! \mid \!\!p_{0} \!\!\mid^{l}=r_{l}\zeta^{l}\leq M$ and series (\ref{Eq18}) converges absolutely and uniformly on any set $T\subset B\left(0,\zeta\right)$ in accordance with Proposition \ref{pr3.7}. Hence the requirement (B) of the complex analysis is also fulfilled in the quaternionic case.

Now we compare radiuses of convergence of the series before and after differentiation. For an absolute value of the $l^{\,'}\!th$ term of the series (\ref{Eq21}) we have the following estimate:
\begin{equation*}
\mid\! \left(r_{l}\,p^{l} \right)^{'}\!\!\mid=lr_{l}\mid \!p^{l-1}\!\mid=lr_{l}\zeta^{l}\cdot\mid\!\frac{p}{\zeta}\!\mid^{l-1} \cdot \frac{1}{\zeta}\leq \frac{M}{\zeta}\cdot l\cdot \mid\!\frac{p}{\zeta}\!\mid^{l-1},\,\,\,\,\,\,l=0,1,2,\dots,
\end{equation*}
where the expression for the first derivative: $\left(r_{l}\,p^{l} \right)^{'}\!\!=l r_{l}p^{l-1}$ is used. The series
\begin{equation*}
\sum_{l=0}^{\infty}\mid\!\left(r_{l}\,p^{l} \right)^{'}\!\!\mid\leq\frac{M}{\zeta}\sum_{l=0}^{\infty}l \mid\!\frac{p}{\zeta}\!\mid^{l-1}=\frac{M}{\zeta}\left\{1+2\mid\!\frac{p}{\zeta}\!\mid +3\mid\!\frac{p}{\zeta}\!\mid ^{2}+\cdots +l\mid\!\frac{p}{\zeta}\!\mid ^{l-1}+\cdots \right\}
\end{equation*}
is absolutely convergent in $ B\left(0,\zeta\right)$. Given $\mid\frac{p}{\zeta}\mid <1$, one can verify this fact by using  Proposition  \ref{pr3.5}\,. Then, by Proposition \ref{pr3.6}, the series $\sum_{l=0}^{\infty}\left(r_{l}\,p^{l} \right)^{'}$ converges absolutely and uniformly on $B\left(0,\zeta\right)$. Since the series $\sum_{l=0}^{\infty}\left(r_{l}\,p^{l} \right)^{'}$ converges absolutely in $B\left(0,\zeta\right)$ and the choice of $p_{0}$ is arbitrary, it is clear that the convergence radius $R^{'}$ of the series (\ref{Eq21}) is not less than the convergence radius $R$ of the series (\ref{Eq18}). This conclusion is sufficient for our objective and  we don't need to prove that $R^{'}$ is exactly equal to $R$. By repeating this reasoning , we prove that the series (\ref{Eq18}) can be  differentiated termwise arbitrarily many times in its ball of convergence.    
\end{proof}

\begin{Proposition} \label{pr3.9}
(a quaternionic Maclaurin series). Let a H-holomorphic function $\psi_{H}\!\left(p\right)$ be represented by a sum of a convergent quaternionic power series:
 \begin{equation*}
\psi_{H}\!\left(p\right)=\sum_{k=0}^{\infty} r_{k}\,p^{k}=r_{0}+r_{1}p+r_{2}p^{2}+\cdots + r_{k}p^{k}+\cdots,\,\,\,\,r_{k}\in \mathbb{R}
\end{equation*}
in a ball $B\left(0,R\right)=\left\{p\!:\mid \!p\!\mid<R \right\}$.Then the coefficients $r_{k}$ are determined uniquely as
\begin{equation} \label{Eq22} 
r_{k}=\frac{\psi_H^{\left(k\right)}\!\!\left(0\right)}{k!},\,\,\,\,k=0,1,2,\dots,
\end{equation} 
where $\psi_H^{\left(k\right)}\!\!\left(0\right)\!, k=1,2,3, \dots$ is the $k^{'}th$ full derivative of $\psi_{H}\!\left(p\right)$ at $p=0$ and $\psi_H^{\left(0\right)}\!\!\left(0\right)$ is $\psi_{H}\!\left(0\right)$. 
\end{Proposition}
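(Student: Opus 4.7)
The plan is to derive the formula $r_{k}=\psi_H^{(k)}(0)/k!$ by repeated termwise differentiation of the series followed by evaluation at $p=0$, in direct analogy with the classical complex Maclaurin expansion. The hypothesis that the series converges in $B(0,R)$ together with Proposition \ref{pr3.7} (applied as in the proof of Proposition \ref{pr3.8}) guarantees uniform convergence on every compact subset of $B(0,R)$, so Proposition \ref{pr3.8} is applicable.

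First, I would appeal to Proposition \ref{pr3.8} to differentiate the series termwise $k$ times inside $B(0,R)$. By Corollary \ref{co1.4}, the full $k$-th quaternionic derivative of each H-holomorphic monomial $r_{l}\,p^{l}$ has the same shape as in the complex case, namely
\begin{equation*}
(r_{l}\,p^{l})^{(k)} =
\begin{cases}
l(l-1)\cdots(l-k+1)\,r_{l}\,p^{l-k}, & l\geq k,\\
0, & l<k.
\end{cases}
\end{equation*}
Applying this termwise yields
\begin{equation*}
\psi_H^{(k)}(p)=\sum_{l=k}^{\infty} l(l-1)\cdots(l-k+1)\,r_{l}\,p^{l-k},\qquad p\in B(0,R).
\end{equation*}

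Second, I would evaluate at $p=0$. Every term with $l>k$ carries a positive power of $p$ and therefore vanishes at the origin, while the $l=k$ term contributes the constant $k!\,r_{k}$. Hence $\psi_H^{(k)}(0)=k!\,r_{k}$, which rearranges to the claimed formula \eqref{Eq22}. The uniqueness assertion is an immediate consequence: any two power series representations of $\psi_{H}$ around $0$ must produce identical values for all the $\psi_H^{(k)}(0)$, and therefore identical coefficients.

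I do not anticipate a substantive obstacle. The only point that requires care is to confirm that the repeated termwise differentiation is legitimate, but this is exactly the content of Proposition \ref{pr3.8}, which asserts that the differentiation can be iterated arbitrarily many times on $B(0,R)$. Evaluation at $p=0$ is harmless since only a single term of each differentiated series is nonzero there, so no additional convergence argument is needed at that final step.
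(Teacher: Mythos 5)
Your proposal is correct and follows essentially the same route as the paper's own proof: termwise differentiation of the series $k$ times followed by evaluation at $p=0$, which isolates the constant term $k!\,r_{k}$. You are somewhat more explicit than the paper in justifying the termwise differentiation via Propositions \ref{pr3.7} and \ref{pr3.8} and in spelling out the uniqueness conclusion, but the substance of the argument is identical.
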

\begin{proof}
Since $\psi_{H}\!\left(p\right)$ is H-holomorphic in a ball $B\left(0,R\right)$, there exist, according to Theorem \ref{th1.2}, all derivatives $\psi_H^{\left(k\right)}\!\!\left(0\right),\, k=1,2, \dots,$ in $B\left(0,R\right)$. Putting $p=0$ in $\psi_{H}\!\left(p\right)= \sum_{k=0}^{\infty} r_{k}\,p^{k}$ we find $r_{0}=\psi_{H}\!\left(0\right)$. Differentiating  $\psi_{H}\!\left(p\right)= \sum_{k=0}^{\infty} r_{k}\,p^{k}$ termwise and using the formula $\left(r_{k}\,p^{k} \right)^{'}= r_{k}k p^{k-1}$, we obtain 
\begin{equation*}
\psi_H^{'}\!\left(p\right) =r_{1}+2r_{2}p+3r_{3}p^{2}+\cdots + kr_{k}p^{k-1}+\cdots.
\end{equation*}

Putting $p=0$ into this expression, we obtain $r_{1}=\psi_H^{'}\!\left(0\right)$. Further, differentiating the expression for $\psi_H^{'}\!\left(p\right)$, we get the second derivative as follows: 
\begin{equation*}
 \psi_H^{\left(2\right)}\!\left(p\right)=2r_{2}+3\cdot2 r_{3}p+4\cdot3r_{4}p^{2}+5\cdot4r_{5}p^{3}+\cdots+ k\cdot\left(k-1\right)r_{k}p^{k-2}+\cdots.
\end{equation*}
Putting $p=0$ into this expression, we obtain $r_{2}=\frac{\psi_H^{\left(2\right)}\!\left(0\right)}{2}.$ Differentiating $ \psi_{H}\!\left(p\right)=\sum_{k=0}^{\infty} r_{k}\,p^{k}\,\,k$ times, we get $$\psi_H^{\left(k\right)}\!\left(p\right) =k! r_{k}+positive\, powers\,of\,p\,multiplied \,by\,integers.$$
Then once again putting $p=0$, we obtain $r_{k}=\frac{\psi_H^{\left(k\right)}\!\!\left(0\right)}{k\,!}.$
\end{proof}

Note that representations of H-holomorphic functions by Maclaurin series are unique, since the coefficients of series are determined uniquely by formula (\ref{Eq22}) when using the full quaternionic derivatives (\ref{Eq6}) of the functions in question. Therefore, in a problem of representing a H-holomorphic function by a quaternionic power series the answer does not depend on the methods adopted for this purpose.

We can introduce the notion of a H-analytic function by the following
\begin{Definition}
(a H-analytic function)  A quaternionic function is said to be   H-analytic in an open connected ball $B\left(0,R\right)$ if it can be expanded  as a convergent  Maclaurin (Taylor) series in $B\left(0,R\right)$, where $R\geq 0$ is a radius of convergence. 
\end{Definition}
Whether we speak of a Maclaurin or a Taylor series, doesn't matter here, since we can go from one series to another by replacing variables. 
Now  we can formulate the following
\begin{Proposition} \label{pr3.13} 
Every H-holomorphic function is H-analytic. 
\end{Proposition}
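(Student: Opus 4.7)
The plan is to produce a convergent quaternionic Maclaurin series for $\psi_{H}\!\left(p\right)$ by transferring the classical complex Maclaurin expansion of its C-representation across the correspondence of Theorem~\ref{th1.2}. First I would fix a H-holomorphic $\psi_{H}\!\left(p\right)$ on an open connected ball $B\left(0,R\right)$ and, applying Theorem~\ref{th1.2} in the direction $p\to\xi$, pass to the associated C-holomorphic function $\psi_{C}\!\left(\xi\right)$ on the disk $D\left(0,R\right)\subset\mathbb{C}$. Classical complex analysis then yields the Maclaurin expansion
$$\psi_{C}\!\left(\xi\right)=\sum_{k=0}^{\infty}\frac{\psi_C^{\left(k\right)}\!\left(0\right)}{k!}\,\xi^{k},$$
converging in $D\left(0,R\right)$.

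Next, using Corollary~\ref{co1.4} together with (\ref{Eq6})--(\ref{Eq7}), the full quaternionic derivatives $\psi_H^{\left(k\right)}\!\left(p\right)$ are produced from $\psi_C^{\left(k\right)}\!\left(\xi\right)$ by the same substitution $\xi\to p$. Setting $r_{k}:=\psi_H^{\left(k\right)}\!\left(0\right)/k!$, I would argue that each $r_{k}\in\mathbb{R}$: otherwise the summand $r_{k}p^{k}$ would be a non-real quaternionic constant multiplying the H-holomorphic function $p^{k}$, which by Proposition~\ref{pr2.1} fails to be H-holomorphic, and together with the linearity asserted in Proposition~\ref{pr2.2} this would break the H-holomorphy of the partial sums and their limit $\psi_{H}\!\left(p\right)$.

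With $r_{k}\in\mathbb{R}$, I would verify convergence of $\sum_{k=0}^{\infty} r_{k}\,p^{k}$ on $B\left(0,R\right)$ as follows: for each real $\zeta$ with $0<\zeta<R$ the numerical series $\sum_{k} r_{k}\,\zeta^{k}$ coincides with $\psi_{C}\!\left(\zeta\right)$ (since $\zeta\in\mathbb{R}\subset\mathbb{C}$) and hence converges, so by Proposition~\ref{pr3.4} its terms are bounded. Proposition~\ref{pr3.7} then gives absolute and uniform convergence of $\sum_{k} r_{k}\,p^{k}$ on every compact subset of $B\left(0,\zeta\right)$, and letting $\zeta\nearrow R$ covers $B\left(0,R\right)$. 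Proposition~\ref{pr3.8} ensures that the sum is H-holomorphic in $B\left(0,R\right)$, and its identification with $\psi_{H}\!\left(p\right)$ follows either by termwise comparison against the C-expansion via Theorem~\ref{th1.2} or by the coefficient uniqueness of Proposition~\ref{pr3.9}.

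The step I expect to be the main obstacle is justifying the reality of the Maclaurin coefficients $r_{k}$; the remaining steps are a routine transfer through the correspondence of Theorem~\ref{th1.2} and the convergence machinery of Propositions~\ref{pr3.5}--\ref{pr3.8}. Everything rests on the guiding principle embodied in Proposition~\ref{pr2.3}: once the same real scalars $r_{k}$ serve both representations, the quaternionic series inherits its convergence domain and H-holomorphy directly from the parallel complex series, so that $\psi_{H}\!\left(p\right)$ meets the definition of a H-analytic function.
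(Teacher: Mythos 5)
Your argument is correct within the paper's framework, but it takes a genuinely different and substantially more complete route than the paper does. The paper's entire proof is a one-sentence citation of Proposition~\ref{pr3.9}; note, however, that Proposition~\ref{pr3.9} as stated is conditional --- it \emph{assumes} the function is already represented by a convergent power series and only derives the formula $r_{k}=\psi_H^{(k)}(0)/k!$ for the coefficients --- so the paper's proof quietly skips the existence of the expansion. You supply exactly that missing existence argument: pass to the C-representation via Theorem~\ref{th1.2}, expand classically, transfer the (real) coefficients back, and verify convergence on $B(0,R)$ with Propositions~\ref{pr3.4}, \ref{pr3.7} and \ref{pr3.8}, identifying the sum through the uniqueness of Proposition~\ref{pr3.9}. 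What your approach buys is an actual construction of the series; what it costs is exposure of two soft spots that the paper leaves implicit. First, Theorem~\ref{th1.2} is stated only in the direction $\xi\to p$, so your opening step ``applying Theorem~\ref{th1.2} in the direction $p\to\xi$'' relies on the unproved converse that every H-holomorphic function arises from a C-holomorphic one; the paper papers over this with the postulate of Proposition~\ref{pr2.3}. Second, your reality argument for the $r_{k}$ is not airtight as stated --- the failure of H-holomorphy of an individual term $r_{k}p^{k}$ does not by itself preclude the full sum from being H-holomorphic --- though it matches the paper's own level of rigor, and the paper's closing remark about replacing $i$ by the quaternionic unit $r$ shows the authors are aware that complex coefficients need separate treatment. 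Neither issue is resolved in the paper either, so your proof is at least as sound as the original and considerably more informative.
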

\begin{proof}
According to Proposition  \ref{pr3.9},  every H-holomorphic function can be uniquely expanded as a convergent  Maclaurin (Taylor) series in some open connected ball $B\left(0,R\right)$, where $R\geq 0$ is a radius of series convergence. 
\end{proof}
\begin{Proposition} \label{pr3.10} 
(a similarity between C- and H-representations of power series). The power series expansions of every holomorphic function in C- and H-representations are similar and  converge  with identical convergence radiuses $R$. In a C-representation we regard an $R$ as a radius of a convergence disk and in a  H-representation as a radius of a corresponding convergence ball.
\end{Proposition}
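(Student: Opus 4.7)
The plan is to reduce the statement to two separate claims: (i) the coefficient sequences of the C- and H-Maclaurin expansions coincide, and (ii) the convergence radii given by those coefficients coincide. Start with a holomorphic function $\psi$ whose C-representation is $\psi_{C}(\xi)=\sum_{k=0}^{\infty}r_{k}\xi^{k}$, convergent in some disk $D(0,R)$. By Theorem \ref{th1.2}, the H-representation $\psi_{H}(p)$ is obtained simply by replacing $\xi$ by $p$, without altering the kind of function; by Proposition \ref{pr3.13}, $\psi_{H}$ is H-analytic, so it admits a unique Maclaurin expansion $\psi_{H}(p)=\sum_{k=0}^{\infty}\tilde r_{k}p^{k}$ in an open ball $B(0,\tilde R)$. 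It therefore suffices to show $\tilde r_{k}=r_{k}$ and $\tilde R=R$.

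For the coefficient identification, I would apply Proposition \ref{pr3.9}, which gives $\tilde r_{k}=\psi_{H}^{(k)}(0)/k!$, together with the complex analogue giving $r_{k}=\psi_{C}^{(k)}(0)/k!$. By Corollary \ref{co1.4}, the full quaternionic derivative $\psi_{H}^{(k)}(p)$ has exactly the same formal expression in $p$ as $\psi_{C}^{(k)}(\xi)$ has in $\xi$; evaluating both at the origin (where $p=0$ corresponds to $\xi=0$ under the transition of Theorem \ref{th1.2}) yields the same real value, since the coefficients $r_{k}$ appearing in the expansion are real by Proposition \ref{pr2.1} and Proposition \ref{pr2.2}. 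Hence $\tilde r_{k}=r_{k}$ for every $k$, and the two series are termwise identical once $\xi$ is replaced by $p$. This establishes the \emph{similarity} of the expansions.

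For the equality of radii, the key observation is that the d'Alembert ratio of Proposition \ref{pr3.5} for the quaternionic series is
\begin{equation*}
\lim_{k\to\infty}\frac{\mid r_{k+1}p^{k+1}\mid}{\mid r_{k}p^{k}\mid}=\mid p\mid\lim_{k\to\infty}\frac{r_{k+1}}{r_{k}},
\end{equation*}
because the $r_{k}$ are real and $\mid p^{k+1}\mid/\mid p^{k}\mid=\mid p\mid$ (the quaternionic modulus being multiplicative). The complex ratio is identical with $\mid\xi\mid$ in place of $\mid p\mid$. Thus the threshold $L<1$ is crossed at $\mid p\mid=1/L$ and at $\mid\xi\mid=1/L$ respectively, giving $\tilde R=R$ when the ratio test applies. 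When the ratio limit fails to exist, I would instead invoke the Cauchy--Hadamard-type formula $1/R=\limsup_{k\to\infty}r_{k}^{1/k}$, which depends only on the (identical) real coefficients; this can be justified from Propositions \ref{pr3.6} and \ref{pr3.7} by the same comparison with a real geometric majorant used in the proof of Proposition \ref{pr3.8}. In either case $\tilde R=R$.

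The main obstacle is this last step: the text does not explicitly state Cauchy--Hadamard for quaternionic power series, so to cover series whose ratio limit does not exist one must argue directly that the same real bound $\mid p\mid<R$ that makes the comparison series $M\sum(\mid p\mid/\zeta)^{k}$ converge (as in Proposition \ref{pr3.7}) is also the sharp threshold of convergence; any $\mid p\mid>R$ forces the terms $r_{k}p^{k}$ to be unbounded (by contrapositive of Proposition \ref{pr3.4} applied to the complex series and the identity of moduli), ruling out convergence. Everything else in the argument is a transparent transcription from the complex to the quaternionic setting permitted by Theorem \ref{th1.2} and Corollary \ref{co1.4}.
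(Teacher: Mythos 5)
Your proposal is correct in substance, but it takes a genuinely different route from the paper. The paper's own proof is a two-sentence appeal to Theorem \ref{th1.2}: the H-representation is obtained by substituting $p$ for $\xi$ without changing the kind of function, and the domain $G_{4}$ is by construction the set from which $G_{2}$ ``exactly follows'' under the transition $p\to\xi$; hence if $G_{2}$ is the disk $D(0,R)$ then $G_{4}$ can only be the ball $B(0,R)$ with the same $R$, and the identity of coefficients is likewise read off from the substitution. You instead prove the two halves separately: the similarity of coefficients via the uniqueness of the Maclaurin expansion (Proposition \ref{pr3.9}) combined with Corollary \ref{co1.4}, and the equality of radii via the multiplicativity of the quaternionic modulus, which makes $\mid r_{k}p^{k}\mid=\mid r_{k}\mid\,\mid p\mid^{k}$ literally the same real series as $\mid r_{k}\mid\,\mid\xi\mid^{k}$ when $\mid p\mid=\mid\xi\mid$, so every convergence test (ratio, majorant comparison, boundedness of terms) gives the same threshold. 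Your version is longer but isolates the actual mechanism behind the radius equality, rather than deriving it from the domain-correspondence clause of Theorem \ref{th1.2}, which in the paper functions almost as a definition; it also correctly flags that the ratio test alone does not cover all series and supplies the needed fallback (comparison as in Proposition \ref{pr3.7} plus the contrapositive of Proposition \ref{pr3.4}), a point the paper never has to confront because of its more axiomatic shortcut. Two cosmetic slips: the ratio and root expressions should carry absolute values on the coefficients, $\lim\mid r_{k+1}\mid/\mid r_{k}\mid$ and $\limsup\mid r_{k}\mid^{1/k}$, since the $r_{k}$ may be negative.
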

\begin{proof}
According to Theorem \ref{th1.2} , every H-holomorphic function (also its series in H-representation) defined in an open connected domain $G_{4}\subseteq \mathbb{H}$ is constructed  from a C-holomorphic function of the same kind (also its series in C-representation) defined in an open connected domain $G_{2}\subseteq \mathbb{C}$ by replacing a complex variable by a quaternionic one, and $G_{4}$ is such that $G_{2}$  exactly follows from $G_{4}$ upon the transition from $p$ to $\xi$.  Given this relationship, if we regard, as mentioned above, a domain $G_{2}$ as a complex disk $D\left(0,R\right)=\left\{\xi:\mid \xi \mid<R \right\}$, then a domain $G_{4}$ can only be a quaternionic  ball $B\left(0,R\right)=\left\{p\!:\mid \!p\mid<R \right\}$ with the same $R$, i.e. the convergence  radiuses of the series of a holomorphic function in C- and H-representstions are equal. The similarity of series (the same coefficients of the terms of the same degrees) in C- and H-representations also follows from Theorem \ref{th1.2}.
\end{proof}

The quaternionic generalization of Laplace's equations within the theory of H-holomorphic functions is presented in \cite[\!p.\!~5]{pm:qpc}.  At that, H-harmonic components of H-holomorphic functions are denoted  by $\psi_{1}\left(x,y,z,u\right),\,\psi_{2}\left(x,y,z,u\right),\,\psi_{3}\left(x,y,z,u\right)$ and $\psi_{4}\left(x,y,z,u\right).$ 

Note that if coefficients of an initial complex series contain explicitly the imaginary unit $i$, we must use replacing \cite[\!p.\!~6]{pm:onno} the imaginary complex unit $i$ with the quaternionic imaginary unit $r$ in order to obtain a quaternionic analogue of this complex series. 
\section{Examples} 
\subsection{Examples of the elementary functions \mathversion{bold}$e^{p},\,\sin p, \cos p$}  \label{sub4.1}
In complex analysis we have, for example, the following power series expansions \cite{mh:ca}:
\begin{align*} 
&e^{\xi}=\sum_{l=0}^{\infty}\frac{\xi^{l}}{l\,!}=1+\xi+\frac{\xi^{2}}{2\,!}+\frac{\xi^{3}}{3\,!}+\cdots,\,\,\,\xi \in \mathbb{C}, \\  & \sin \xi=\sum_{l=0}^{\infty}{\left(-1\right)}^{l} \frac{\xi^{2l+1}}{\left(2l+1\right)!}=\xi-\frac{\xi^{3}}{3!}+\frac{\xi^{5}}{5!}-\frac{\xi^{7}}{7!}+\cdots,\\ &  \cos \xi=\sum_{l=0}^{\infty}{\left(-1\right)}^{l} \frac{\xi^{2l}}{\left(2l\right)!}=1-\frac{\xi^{2}}{2!}+\frac{\xi^{4}}{4!}-\frac{\xi^{6}}{6!}+\cdots.
\end{align*}

Replacing $\xi$ by $p$ in these expressions, we obtain in accordance with Theorem \ref{th1.2} the following power series expansions about a point $p=0$ for the H-holomorphic functions $e^{p},\,\sin p,$ and $\cos p:$

\begin{equation} \label{Eq23} 
 \hspace*{-1.3cm} e^{p}=\sum_{l=0}^{\infty}\frac{p^{l}}{l!}=1+p+\frac{p^{2}}{2\,!}+\frac{p^{3}}{3\,!}+\cdots,\,\,\,p \in \mathbb{H}, 
\end{equation}
\begin{equation}  \label{Eq24}
\sin p=\sum_{l=0}^{\infty}{\left(-1\right)}^{l} \frac{p^{2l+1}}{\left(2l+1\right)!}=p-\frac{p^{3}}{3!}+\frac{p^{5}}{5!}-\frac{p^{7}}{7!}+\cdots, 
\end{equation}
\begin{equation} \label{Eq25} 
 \hspace*{-0.70cm} \cos p=\sum_{l=0}^{\infty} {\left(-1\right)}^{l} \frac{p^{2l}}{\left(2l\right)!}=1-\frac{p^{2}}{2!}+\frac{p^{4}}{4!}-\frac{p^{6}}{6!}+\cdots. 
\end{equation}
The verification of the H-holomorphy of the functions $\sin p $ and $\cos p$ on the left-hand of these expressions can be made analogously to those of H-holomorphy of the function $e^{p}\!,$ which is given in \cite[p.\!\!~6]{pm:onno}. We skip the details.

In accordanse with Proposition \ref{pr2.3} we can speak of a H-representation of the functions $exp, sin$ and $cos.$ 

Applying d'Alembert's ratio test (Proposition \ref{pr3.5}) to the series $e^{p}=\sum_{l=0}^{\infty} r_{l}\,p^{l},$ where $r_{l}=\frac{1}{l!},$ we have $$L=\lim_{l \rightarrow \infty}\frac{\mid\! r_{\left(l+1\right)}\,p^{{l+1}} \!\mid}{\mid \!r_{l}\,p^{l} \!\mid}=\lim_{l \rightarrow \infty}\frac{ \mid p^{{l+1}} \!\mid l\,!}{\left(l+1\right)! \mid \!p^{l} \!\mid }= \mid \!p\!\mid \!\!\lim_{l \rightarrow \infty}\frac{1}{l+1}=0<1.$$
Then, in accordance with this test, the radius of convergence of the series $e^{p}=\sum_{l=0}^{\infty}\frac{p^{l}}{l\,!}$ is $R=\frac{1}{L}=\infty.$

Analogously, for the series $\sin p=\sum_{l=0}^{\infty}{\left(-1\right)}^{l} \frac{p^{{2l+1}}}{\left(2l+1\right)!}$ we find 
\begin{align*} 
L&=\lim_{l \rightarrow \infty}\frac{\mid\! r_{\left(l+1\right)}\,p^{{l+1}} \!\mid}{\mid \!r_{l}\,p^{l} \!\mid}=
\lim_{l \rightarrow \infty}\frac{\left(-1\right)^{l+1} \mid p^{2\left(l+1\right)+1} \!\mid \left(2l+1\right)! }{\left[2\left(l+1\right)+1\right]!\,\left(-1\right)^{l} \mid \!p^{{2l+1}} \!\mid } \\  
 &=-\lim_{l \rightarrow \infty}\frac{\mid p^{2}\!\mid}{\left(2l+2\right)\left(2l+3\right)}=0<1,
\end{align*}
whence the radius of convergence of the power series expansion of $\sin p$ is also $R=\frac{1}{L}=\infty.$ The calculation for $\cos p$ gives $R=\infty$ as well. Thus, the convergence set of series expansions for the functions $e^{p}\!,\,\sin p,\cos p$ is the set of all quaternion numbers. We see that these results correspond  Proposition \ref{pr3.10} and we further can immediately get a convergence radiuses in similar cases without making any calculations.

The verification of the uniform convergence of these series is analogous to those used in the proof of Proposition \ref{pr3.8}, since these series can be majorized for any $p\in\mathbb{H}$ by the convergent series with the terms $r_{l}\,\zeta^{l},$  where $\zeta$ is a positive number such that $\mid \! p \!\mid <\zeta <\infty.$ According to Proposition \ref{pr3.8} , we can differentiate termwise these series at all points $p\in\mathbb{H}.$ At that, we get the following expressions for derivatives:

\begin{align*} 
&\left(e^{p}\right)^{'}=\left(\sum_{l=0}^{\infty}\frac{p^{l}}{l!}\right)^{'}=\sum_{l=0}^{\infty}\left(\frac{p^{l}}{l!}\right)^{'}=\sum_{k=1}^{\infty}\frac{p^{k-1}}{\left(k-1\right)!}=\sum_{l=0}^{\infty}\frac{p^{l}}{l!}=e^{p},\\  
&\left(\sin p\right)^{'}=\left(\sum_{l=0}^{\infty}{\left(-1\right)}^{l} \frac{p^{2l+1}}{\left(2l+1\right)!}\right)^{'}=\sum_{l=0}^{\infty}{\left(-1\right)}^{l}\left(\frac{p^{2l+1}}{\left(2l+1\right)!}\right)^{'}\\
&=\sum_{l=0}^{\infty}{\left(-1\right)}^{l}\frac{p^{2l}}{\left(2l\right)!}=\cos p,\\
&\left(\cos p\right)^{'}=\left(\sum_{l=0}^{\infty}{\left(-1\right)}^{l} \frac{p^{2l}}{\left(2l\right)!}\right)^{'}=\sum_{l=0}^{\infty}{\left(-1\right)}^{l}\left(\frac{p^{2l}}{\left(2l\right)!}\right)^{'}\\
&=\sum_{k=1}^{\infty}{\left(-1\right)}^{k}\frac{p^{2k-1}}{\left(2k-1\right)!}=-\sum_{l=0}^{\infty}{\left(-1\right)}^{l}\frac{p^{{2l+1}}}{\left(2l+1\right)!}=-\sin p,
\end{align*}
which are analogous to  those in complex analysis.

Now we want to show that formulae (\ref{Eq23}), (\ref{Eq24}) and (\ref{Eq25}) can be obtained by means of Maclaurin series expansions (Proposition \ref{pr3.9}) of the H-holomorphic functions $e^{p},\,\sin p,$ and $\cos p.$ In other words, we want to show that the coefficients of these series, which are preserved during the transition from the complex to the quaternion case by the replacement of a complex variable by a quaternionic one, are coefficients (\ref{Eq22}) of Maclaurin series expansions of these functions.

We can represent the quaternion variable $p=x+yi+zj+uk$ as a sum of real and imaginary parts: $p=x+Vr,$ where $V=\sqrt{y^{2}+z^{2}+u^{2}}$ is a real value and $$r=\frac{yi+zj+uk}{\sqrt{y^{2}+z^{2}+u^{2}}}$$
is a purely imaginary unit quaternion, so its square is $-1.$ Since $r^{2}=-1$ as well as $x$ and $V$ are real values, the quaternionic formula $p=x+Vr$ is algebraically equivalent to the complex formula $\xi=x+yi.$ Then we generalize the complex Euler formula \cite{mh:ca}  as $e^{{rV}}=\cos V+r \sin V,$ and  then the known complex formulae $\cos \xi=\frac{e^{{i\xi}}+e^{{-i\xi}}}{2}$ and $\sin \xi=\frac{e^{{i\xi}}-e^{{-i\xi}}}{{2i}}$ as :
\begin{equation}  \label{Eq26} 
\cos p=\frac{e^{{rp}}+e^{{-rp}}}{2}, 
\end{equation}
\begin{equation}  \label{Eq27}
\sin p=\frac{e^{{rp}}-e^{{-rp}}}{{2r}},
\end{equation}
where the complex imaginary unit $i$ is replaced  \cite[\!p.\!~6]{pm:onno} by the quaternionic $r$.

Using (\ref{Eq26}), (\ref{Eq27}) and relations $x=\frac{a+\overline{a}}{2},\,\,y=\frac{a-\overline{a}}{{2i}},\,\,z=\frac{b+\overline{b}}{2},\,\,u=\frac{b-\overline{b}}{{2i}},$ following from (\ref{Eq2}), we obtain after some cumbersome calculation the following expressions for the functions $\cos p$ and $\sin p$ in the Cayley–Dickson doubling form:
\begin{equation*} 
 \cos p=\frac{e^{{rp}}+e^{{-rp}}}{2}=\Phi_{{1\left(\cos p\right)}}+\Phi_{{2\left(\cos p\right)}}\cdot j,
\end{equation*}
where
\begin{equation} \label{Eq28} 
\Phi_{{1\left(\cos p\right)}}=\frac{\left(e^{{-V}}+e^{V}\right)\cos \frac{a+\overline{a}}{2}}{2}+\frac{\left(a-\overline{a}\right)\left(e^{{-V}}-e^{V}\right)\sin \frac{a+\overline{a}}{2}}{{4V}},
\end{equation}
\begin{equation} \label{Eq29} 
\hspace*{-5.2cm} \Phi_{{2\left(\cos p\right)}}=\frac{\left(e^{{-V}}-e^{V}\right)\sin \frac{a+\overline{a}}{2}}{{2V}}b
\end{equation}
and
\begin{equation*}
\sin p=\frac{e^{{rp}}-e^{{-rp}}}{{2r}}=\Phi_{{1\left(\sin p\right)}}+\Phi_{{2\left(\sin p\right)}}\cdot j,
\end{equation*}
where (see also \cite[p.~19]{pm:eac})
\begin{equation} \label{Eq30} 
\Phi_{{1\left(\sin p\right)}}=\frac{\left(e^{{-V}}+e^{V}\right)\sin \frac{a+\overline{a}}{2}}{2}-\frac{\left(a-\overline{a}\right)\left(e^{{-V}}-e^{V}\right)\cos \frac{a+\overline{a}}{2}}{{4V}},
\end{equation}
\begin{equation} \label{Eq31} 
\hspace*{-4.65cm} \Phi_{{2\left(\sin p\right)}}=-\frac{\left(e^{{-V}}-e^{V}\right)\cos \frac{a+\overline{a}}{2}}{{2V}}b.
\end{equation}

According to \cite[\!p.\!~5]{pm:onno}, we have  the function $e^{p}=\Phi_{{1\left(e^{p}\right)}}+\Phi_{{2\left(e^{p}\right)}}\cdot j$, where
\begin{equation} \label{Eq32} 
\Phi_{{1\left(e^{p}\right)}}=2\beta\cos V+\frac{\beta\left(a-\overline{a}\right)\sin V}{V},
\end{equation}
\begin{equation} \label{Eq33} 
\hspace*{-2.9cm} \Phi_{{2\left(e^{p}\right)}}=\frac{2\beta\sin V}{V}b,
\end{equation}
and $\beta=\frac{e^{\frac{a+\overline{a}}{2}}}{2}.$

Now we calculate the values of the functions $e^{p},\,\sin p,$ and $\cos p$ as well as the value of $V=\sqrt{y^{2}+z^{2}+u^{2}}$ at the point $p=a+b\cdot j=0,$ i.e. at
\begin{equation} \label{Eq34}
a=0,\,\,\,\,\,\,\overline{a}=0,\,\,\,\,\,\,b=0,\,\,\,\,\,\,\overline{b}=0\,\,\left(\text{or}\,\, x=y=z=u=0\right),   \,\,\,\,\,\,V=0.
\end{equation}
At that, to eliminate indeterminate expressions of the form $\frac{0}{0}$ we use the known  limit relation \cite[\!p.\!\!~233]{mh:ca}:
\begin{equation}   \label{Eq35} 
\lim_{V \rightarrow 0}\frac{\sin V}{V}=1
\end{equation}
as well as the following limit relation calculated by using L’Hospital’s rule \cite[\!p.\!\!~254]{pm:hm}:
\begin{equation} \label{Eq36} 
\lim_{V \rightarrow 0}=\frac{e^{{-V}}-e^{V}}{V}=\lim_{V \rightarrow 0}\frac{\left(e^{{-V}}-e^{V}\right)^{'}}{V^{'}}=\lim_{V \rightarrow 0}\frac{{-e}^{{-V}}-e^{V}}{1}=-2.
\end{equation}

Substituting (\ref{Eq34}), (\ref{Eq35}), (\ref{Eq36}) into (\ref{Eq28}), (\ref{Eq29}),  (\ref{Eq30}),  (\ref{Eq31}), (\ref{Eq32}) and (\ref{Eq33}), we get the following values for the functions $e^{p},\,\sin p,$ and $\cos p$ at the point $p=0:$
\begin{equation}  \label{Eq37}
e^{0}=1,\,\,\,\,\sin 0=0,\,\,\,\,\cos 0=1,
\end{equation}
coinciding with analogous values of the corresponding functions in real and complex areas.   Further, using (\ref{Eq37}), we get the expansion coefficients for series (\ref{Eq18}) when $l=0$ as follows: $$r_{0\left(e^{p}\right)}=e^{0}=1,\,\,\,\,r_{0\left(\sin p\right)}=\sin 0=0,\,\,\,\,r_{0\left(\cos p\right)}=\cos 0=1.$$
They coincide with the coefficients of the terms of degree 0 in (\ref{Eq23}), (\ref{Eq24}), and (\ref{Eq25}).

As shown in \cite[\!p.\!\!~7]{pm:onno}, the derivatives of all orders of the quaternionic function $e^{p}$ are equal to the function itself. Then, according to (\ref{Eq22}), we get $r_{k\left(e^{p}\right)}=\frac{\psi_H^{\left(k\right)}\!\!\left(0\right)}{k\,!}=\frac{\left(e^{p}\right)^{{\left(k\right)}}\left(0\right)}{{k!}}=\frac{e^{0}}{{k!}}=\frac{1}{{k!}},$ where $k=0,1,2,\dots.$ The obtained values of coefficients coincide with the corresponding values of coefficients in (\ref{Eq23}). The Maclaurin series expansion for the function $e^{p}$  coincides with series (\ref{Eq23}) obtained by using Theorem \ref{th1.2}.

Now, we show how the derivatives of $\cos p$ and $\sin p$ are further computed within the framework of H-holomorphic functions theory. According to (\ref{Eq4}), we define the first full quaternionic derivative of $\cos p$ as follows:
\begin{equation} \label{Eq37a} 
\left(\cos p\right)^{'}=\Phi_{{1\left(\cos p\right)}}^{'}+\Phi_{{2\left(\cos p\right)}}^{'}\cdot j,
\end{equation}
where
\begin{equation} \label{Eq38} 
\Phi_{{1\left(\cos p\right)}}^{'}=\partial_{a}\Phi_{{1\left(\cos p\right)}}+\partial_{\overline{a}}\Phi_{{1\left(\cos p\right)}},
\end{equation}
\begin{equation} \label{Eq39} 
\Phi_{{2\left(\cos p\right)}}^{'}=\partial_{a}\Phi_{{2\left(\cos p\right)}}+\partial_{\overline{a}}\Phi_{{2\left(\cos p\right)}}.
\end{equation}

Now we need to compute the partial derivatives of $\Phi_{{1\left(\cos p\right)}}$ and $\Phi_{{2\left(\cos p\right)}}$ with respect to each of $a,\overline{a}.$ After some cumbersome calculations, we have for $\partial_{a}\Phi_{{1\left(\cos p\right)}}$ and $\partial_{\overline{a}}\Phi_{{1\left(\cos p\right)}}$ the following expressions:
\[ \begin{split}
\partial_{a}\Phi_{{1\left(\cos p\right)}}=\partial_{a}&\left[\frac{\left(e^{{-V}}+e^{V}\right)\cos \frac{a+\overline{a}}{2}}{2}+\frac{\left(a-\overline{a}\right)\left(e^{{-V}}-e^{V}\right)\sin \frac{a+\overline{a}}{2}}{{4V}}\right] \\
 = \frac{1}{2}&\left[\frac{\left(a-\overline{a}\right)\left(e^{{-V}}-e^{V}\right)\cos \frac{a+\overline{a}}{2}}{{4V}}-\frac{\left(e^{{-V}}+e^{V}\right)\sin \frac{a+\overline{a}}{2}}{2}\right]\\
+\frac{1}{2}&\left[\frac{\left(e^{{-V}}-e^{V}\right)\sin \frac{a+\overline{a}}{2}}{2V}+\frac{\left(a-\overline{a}\right)^{2}\left(e^{{-V}}+e^{V}\right)\sin \frac{a+\overline{a}}{2}}{8V^{2}}\right. \\ &\left.+\frac{\left(a-\overline{a}\right)\left(e^{{-V}}-e^{V}\right)\cos \frac{a+\overline{a}}{2}}{4V}+\frac{\left(a-\overline{a}\right)^{2}\left(e^{{-V}}-e^{V}\right)\sin \frac{a+\overline{a}}{2}}{8V^{3}}\right]\!,
\end{split} \]
\[ \begin{split}
\partial_{\overline{a}}\Phi_{{1\left(\cos p\right)}}=\partial_{\overline{a}}&\left[\frac{\left(e^{{-V}}+e^{V}\right)\cos \frac{a+\overline{a}}{2}}{2}+\frac{\left(a-\overline{a}\right)\left(e^{{-V}}-e^{V}\right)\sin \frac{a+\overline{a}}{2}}{{4V}}\right]\\
  = \frac{1}{2}&\left[-\frac{\left(a-\overline{a}\right)\left(e^{{-V}}-e^{V}\right)\cos \frac{a+\overline{a}}{2}}{{4V}}-\frac{\left(e^{{-V}}+e^{V}\right)\sin \frac{a+\overline{a}}{2}}{2}\right]\\
+\frac{1}{2}&\left[-\frac{\left(e^{{-V}}-e^{V}\right)\sin \frac{a+\overline{a}}{2}}{2V}-\frac{\left(a-\overline{a}\right)^{2}\left(e^{{-V}}+e^{V}\right)\sin \frac{a+\overline{a}}{2}}{8V^{2}}\right.  \\ &\left.+\frac{\left(a-\overline{a}\right)\left(e^{{-V}}-e^{V}\right)\cos \frac{a+\overline{a}}{2}}{4V}-\frac{\left(a-\overline{a}\right)^{2}\left(e^{{-V}}-e^{V}\right)\sin \frac{a+\overline{a}}{2}}{8V^{3}}\right]\!.\\
\end{split} \]

Substituting these expressions into (\ref{Eq38}) and reducing like terms, we get
\begin{equation}  \label{Eq40}   
\Phi_{{1\left(\cos p\right)}}^{'}=-\frac{\left(e^{{-V}}+e^{V}\right)\sin \frac{a+\overline{a}}{2}}{2}+\frac{\left(a-\overline{a}\right)\left(e^{{-V}}-e^{V}\right)\cos \frac{a+\overline{a}}{2}}{{4V}}=-\Phi_{{1\left(\sin p\right)}}.
\end{equation}

Quite analogously we obtain for the partial derivatives $\partial_{a}\Phi_{{2\left(\cos p\right)}}$ and $\partial_{\overline{a}}\Phi_{{2\left(\cos p\right)}}$ the following expressions:
\begin{equation*}
 \begin{split}
\partial_{a}\Phi_{{2\left(\cos p\right)}}&=\frac{b}{2}\,\partial_{a}\left[\frac{\left(e^{{-V}}-e^{V}\right)\sin \frac{a+\overline{a}}{2}}{V}\right]=\frac{b}{2}\left[\frac{\left(e^{{-V}}-e^{V}\right)\cos \frac{a+\overline{a}}{2}}{2V}\right. \\
&  +\left.\frac{\left(a-\overline{a}\right)\left(e^{{-V}}+e^{V}\right)\sin \frac{a+\overline{a}}{2}}{{4V^{2}}}+\frac{\left(a-\overline{a}\right)\left(e^{{-V}}-e^{V}\right)\sin \frac{a+\overline{a}}{2}}{{4V^{3}}}\right],\\
\end{split}
\end{equation*}
\begin{equation*}
 \begin{split}
\partial_{\overline{a}}\Phi_{{2\left(\cos p\right)}}&=\frac{b}{2}\,\partial_{\overline{a}}\left[\frac{\left(e^{{-V}}-e^{V}\right)\sin \frac{a+\overline{a}}{2}}{V}\right]=\frac{b}{2}\left[\frac{\left(e^{{-V}}-e^{V}\right)\cos \frac{a+\overline{a}}{2}}{2V}\right. \\
&  -\left.\frac{\left(a-\overline{a}\right)\left(e^{{-V}}+e^{V}\right)\sin \frac{a+\overline{a}}{2}}{{4V^{2}}}-\frac{\left(a-\overline{a}\right)\left(e^{{-V}}-e^{V}\right)\sin \frac{a+\overline{a}}{2}}{{4V^{3}}}\right].\\
\end{split}
\end{equation*}
Substituting these expressions into (\ref{Eq39}) we get the second constituent of the full quaternionic derivative of $\cos p$:
\begin{equation} \label{Eq41} 
\Phi_{{2\left(\cos p\right)}}^{'}=\partial_{a}\Phi_{{2\left(\cos p\right)}}+\partial_{\overline{a}}\Phi_{{2\left(\cos p\right)}}=\frac{\left(e^{{-V}}-e^{V}\right)\cos \frac{a+\overline{a}}{2}}{2V}b=-\Phi_{{2\left(\sin p\right)}}.
\end{equation}

Next, substituting (\ref{Eq40}) and (\ref{Eq41}) into (\ref{Eq37a}), we get the following expression:
\begin{equation} \label{Eq42}
\left(\cos p\right)^{'}=\Phi_{{1\left(\cos p\right)}}^{'}+\Phi_{{2\left(\cos p\right)}}^{'} \cdot j=-\Phi_{{1\left(\sin p\right)}}-\Phi_{{2\left(\sin p\right)}} \cdot j=-\sin p.
\end{equation}

Thus, we also have shown that the derivatives of the quaternionic and complex cosine functions have indeed identical expressions in accordance with Corollary \ref{co1.4}.

In an analogous fashion, we obtain for the quaternionic derivative of the function $\sin p$ the following expression:
\begin{equation} \label{Eq43} 
\left(\sin p\right)^{'}=\cos p,
\end{equation}
which coincides with the corresponding expression from the complex analysis. We skip the details here.

 Formulae (\ref{Eq42}) and (\ref{Eq43}) allow us further to calculate, using (\ref{Eq22}) , the coefficients $r_{k}=\frac{\psi_H^{\left(k\right)}\!\!\left(0\right)}{k\,!},\,\,\,\,k=0,1,2,\dots,$ of Maclaurin series expansions for the functions $\sin p$ and $\cos p$ to show that they coincide with corresponding coeffitients in  (\ref{Eq24}) and (\ref{Eq25}). 

To illustrate this we consider the function $\sin p.$ For $\sin p$ we get the coefficient of $p^{1}$ as follows: $r_{1}=\frac{\psi_H^{\left(1\right)}\!\!\left(0\right)}{1\,!}=\left(\sin p\right)_{p=0}^{{\left(1\right)}}=\cos 0=1.$ It coincides with the coefficient of $p^{1}$ in (\ref{Eq24}) that equals 1. The calculation of the coefficient of $p^{2}$ gives $r_{2\left(\sin p\right)}=\frac{\psi_H^{\left(2\right)}\!\!\left(0\right)}{2!}=\frac{\left(\sin p\right)_{p=0}^{\left(2\right)}}{2}=\frac{\left(\cos p\right)_{p=0}^{\left(1\right)}}{2}=-\frac{\sin 0}{2}=0.$ It coincides with the coefficient of $p^{2}$ in (\ref{Eq24}) that is equal to 0. The coefficient of $p^{3}$ is calculated as follows: $r_{3\left(\sin p\right)}=\frac{\psi_H^{\left(3\right)}\!\!\left(0\right)}{3!}=\frac{\left(\sin p\right)_{p=0}^{\left(3\right)}}{3!}=-\frac{\cos 0}{3!}=-\frac{1}{3!}.$ It coincides with the coefficient of $p^{3}$ in (\ref{Eq24}) that is equal to $-\frac{1}{{3!}}.$ In this way, we can go on. 

We have shown that   the application of Theorems \ref{th1.2}, \ref{th1.3}, and Corollary \ref{co1.4} to infinite series gives true results.

\subsection{Example of a product of H-holomorphic functions}  \label{sub4.2}
Consider  a more complicated example: a quaternionic product of two H-holomorphic functions $\sin p$ and $\cos p$. Since a qiaternionic product of H-holomorphic functions is also a H-holomorphic function (see \!\cite[\!p.\!\!~18]{pm:eac}), we can, using (\ref{Eq6}), calculate the full quaternionic derivative of each order and then, using (\ref{Eq22}), obtain each coefficient $r_{k}$ of a Maclaurin series expansion for the quaternionic product in question.  However, such a calculation is cumbersome and we can use in accordance with Corollary \ref{co1.4} the following product differentiation rule   (see also \cite[\!p.\!~20]{pm:eac}): ${{\left(f\cdot g\right)}}^{'}=f^{'}\cdot g+f\cdot g^{'},$ where $f$ and $g$ are arbitrary H-holomorphic functions.

To establish a general expression for terms of this series we  calculate a sufficient number of the terms and  derivatives. 
Given  (\ref{Eq42}) and (\ref{Eq43}),  we obtain for the derivatives of the function $\psi_{H}\!\left(p\right)=\sin p \cdot \cos p$ the following expressions:
\begin{align*} 
&\psi_H^{{\left(1\right)}}\!\left(p\right)=\cos^{2}p-\sin^{2}p, \,\,\,\,\psi_H^{{\left(2\right)}}\!\left(p\right)=-4\sin p \,\cos p,\,\,\,\,\psi_H^{{\left(3\right)}}\!\left(p\right)=-4\left(\cos^{2}p-\sin^{2}p\right),\\  & \psi_H^{{\left(4\right)}}\!\left(p\right)=16\sin p \,\cos p,\,\,\,\,\psi_H^{{\left(5\right)}}\!\left(p\right)=16\left(\cos^{2}p-\sin^{2}p\right),\,\,\,\,\psi_H^{{\left(6\right)}}\!\left(p\right)=-64\sin p \,\cos p ,\\ &  \psi_H^{{\left(7\right)}}\!\left(p\right)=-64\left(\cos^{2}p-\sin^{2}p\right)\!,\,\psi_H^{{\left(8\right)}}\!\left(p\right)=256\sin p \,\cos p,\,\psi_H^{{\left(9\right)}}\!\left(p\right)=256\left(\cos^{2}p-\sin^{2}p\right)\!,\\ & \psi_H^{{\left(10\right)}}\!\left(p\right)=-1024\sin p \,\cos p ,\,\,\,\psi_H^{{\left(11\right)}}\!\left(p\right)=-1024\left(\cos^{2}p-\sin^{2}p\right)\!,\,\\&\psi_H^{{\left(12\right)}}\!\left(p\right)=4096\sin p \,\cos p,\,\,\,
\psi_H^{{\left(13\right)}}\!\left(p\right)=4096\left(\cos^{2}p-\sin^{2}p\right),\,\,\,\,\\ &\psi_H^{{\left(14\right)}}\!\left(p\right)=-16384\sin p \,\cos p,\,\,\,\,\psi_H^{{\left(15\right)}}\!\left(p\right)=-16384\left(\cos^{2}p-\sin^{2}p\right), \\&\psi_H^{{\left(16\right)}}\!\left(p\right)=65536\sin p \,\cos p ,\,\,\,\, \psi_H^{{\left(17\right)}}\!\left(p\right)=65536\left(\cos^{2}p-\sin^{2}p\right).
\end{align*}

Substituting $p=0$ into these expressions and using (\ref{Eq22}), we get the coeffitients  of the  Maclaurin series expansion of the function $\psi_{H}\!\left(p\right)=\sin p \cdot \cos p$ as follows: \nopagebreak[0]
\begin{align*}  
&r_{0}=\frac{\psi_{H}\!\left(0\right) }{{0!}}=\frac{0}{{1}}=0,\,\,r_{1}=\frac{\psi_H^{{\left(1\right)}}\left(0\right)}{{1!}}=\frac{\left(\cos^{2}0-\sin^{2}0\right)}{{1!}}=\frac{1}{{1!}}=1,\,\,r_{2}=\frac{\psi_H^{{\left(2\right)}}\left(0\right)}{{2!}}=0,\\&r_{3}=\frac{\psi_H^{{\left(3\right)}}\left(0\right)}{{3!}}=-\frac{4}{{3!}},\,\,r_{4}=\frac{\psi_H^{{\left(4\right)}}\left(0\right)}{{4!}}=0,\,\,r_{5}=\frac{\psi_H^{{\left(5\right)}}\left(0\right)}{{5!}}=\frac{16}{{5!}},\,\,r_{6}=\frac{\psi_H^{{\left(6\right)}}\left(0\right)}{{6!}}=0, \\ &  r_{7}=\frac{\psi_H^{{\left(7\right)}}\left(0\right)}{{7!}}=-\frac{64}{{7!}},\,\,r_{8}=\frac{\psi_H^{{\left(8\right)}}\left(0\right)}{{8!}}=0,\,\,r_{9}=\frac{\psi_H^{{\left(9\right)}}\left(0\right)}{{9!}}=\frac{256}{{9!}},\,\,r_{10}=\frac{\psi_H^{{\left(10\right)}}\left(0\right)}{{10!}}=0,\\ &r_{11}=\frac{\psi_H^{{\left(11\right)}}\left(0\right)}{{11!}}=-\frac{1024}{{11!}},\,\,r_{12}=\frac{\psi_H^{{\left(12\right)}}\left(0\right)}{{12!}}=0,\,\,
r_{13}=\frac{\psi_H^{{\left(13\right)}}\left(0\right)}{{13!}}=\frac{4096}{{13!}},\\&r_{14}=\frac{\psi_H^{{\left(14\right)}}\left(0\right)}{{14!}}=0,\,\,r_{15}=\frac{\psi_H^{{\left(15\right)}}\left(0\right)}{{15!}}=-\frac{16384}{{15!}},\,\, r_{16}=\frac{\psi_H^{{\left(16\right)}}\left(0\right)}{{16!}}=0,\\
 &r_{17}=\frac{\psi_H^{{\left(17\right)}}\left(0\right)}{{17!}}=\frac{65536}{{17!}}.
\end{align*}

Thus, we get the Maclaurin series expansion for the function $\psi_{H}\!\left(p\right)=\sin p \cdot \cos p$ as follows: 
\begin{align*} 
\sin p\cdot \cos p&=p-\frac{4}{{3!}}p^{3}+\frac{16}{{5!}}p^{5}-\frac{64}{{7!}}p^{7}
+\frac{256}{{9!}}p^{9}\\
 &-\frac{1024}{{11!}}p^{11}+\frac{4096}{{13!}}p^{13}-\frac{16384}{{15!}}p^{15}+\frac{65536}{{17!}}p^{17}-\cdots
\end{align*} 
  
In a general form this series is the following:
\begin{equation} \label{Eq46}
\sin p \cdot \cos= \sum_{l=0}^{\infty}\alpha_{l},
\end{equation}
where
\begin{equation} \label{Eq46a}
\alpha_{l}={{\left(-1\right)}}^{l}\frac{4^{l}p^{{2l+1}}}{{\left(2l+1\right)!}}.
\end{equation}

To verify a convergence  of series (\ref{Eq46}) we use d'Alembert's Ratio Test (see Proposition \ref{pr3.5}). Replacing $l$ by $l+1$ in (\ref{Eq46a}) we get the following expression:
\begin{equation} \label{Eq49}
\alpha_{{l+1}}={{\left(-1\right)}}^{{l+1}}\frac{4^{{l+1}}p^{{2l+3}}}{{\left(2l+3\right)!}}.
\end{equation}

Now, using (\ref{Eq46a}) and (\ref{Eq49}) and reducing like terms, we get the quotient of $\alpha_{{l+1}}$ by $\alpha_{{l}}$: 
\begin{equation} \label{Eq50}
\frac{\alpha_{{l+1}}}{\alpha_{l}}=\frac{{{\left(-1\right)}}^{{l+1}}\frac{4^{{l+1}}p^{{2l+3}}}{{\left(2l+3\right)!}}}{{{\left(-1\right)}}^{l}\frac{4^{l}p^{{2l+1}}}{{\left(2l+1\right)!}}}=-\frac{4p^{2}}{\left(2l+2\right)\left(2l+3\right)}.
\end{equation}
 
 Further, taking limit  (\ref{Eq18d}), we have as follows:
\begin{equation} 
 \lim_{l \rightarrow \infty}=\frac{\mid\!\alpha_{{l+1}}\!\mid}{\mid\!\alpha_{l}\!\mid}=L= \lim_{l \rightarrow \infty}\frac{4\mid \!p^{2}\!\mid}{\left(2l+2\right)\left(2l+3\right)}=0,
\end{equation}
i.e. the  series (\ref{Eq46}) is absolutely convergent at all $p\in\mathbb{H}.$ 

To check whether  series (\ref{Eq46}) is uniformly convergent at all $p\in\mathbb{H}$ we can use the value $M_{l}=\frac{5^{l}p^{{2l+1}}}{{\left(2l+1\right)!}}$ such that $\mid \! \! \alpha_{l} \! \! \mid=\frac{4^{l}p^{{2l+1}}}{{\left(2l+1\right)!}}\leq M_{l}$ for $l=0,1,2,\dots$ and at all $p\in\mathbb{H}.$ At that, the series $\sum_{l=0}^{\infty}M_{l}$ is absolutely convergent at all $p\in\mathbb{H}$  in accordance with Proposition \ref{pr3.5}, since $\lim_{l \rightarrow \infty}\frac{\mid M_{{l+1}}\mid}{\mid M_{l}\mid}=\lim_{l \rightarrow \infty}\frac{5\mid p\mid^{2}}{{\left(2l+2\right)\left(2l+3\right)}}=0.$ 

Thus, according to Proposition \ref{pr3.6}, series (\ref{Eq46}) converges absolutely and  uniformly at all  $p\in\mathbb{H}$. 

Bearing in mind the paramount importance of the isomorphism between the C- and H-representations for the theory in question, we would also like to demonstrate by a direct calculation that the quaternionic multiplication of the H-holomorphic functions $\sin p$ and $\cos p$ behaves as commutative. In the case of multiplication of two arbitrary quaternionic functions $f\!\left(p\right)=f_{1}+f_{2}\cdot j$ and  $g\!\left(p\right)=g_{1}+g_{2}\cdot j$ formula (\ref{Eq12a}) becomes the following:  
 \begin{equation} \label{Eq52}
f\cdot g=\left(f_{1}+f_{2}\cdot j\right)\cdot\left(g_{1}+g_{2}\cdot j\right) =Re\left(f\cdot g\right)+Im\left(f\cdot g\right)\cdot j,
\end{equation}
where $$Re\left(f\cdot g\right)=f_{1}\,g_{1}-f_{2}\,\overline{g_{2}},\,\,\,\,\, Im\left(f\cdot g\right)=f_{2}\,\overline{g_{1}}+f_{1}\,g_{2}$$\\
are the designations of ''real'' and ``imaginary'' parts of the quaternionic product. 

\textit{The quaternionic product} $\sin p\cdot\cos p.$ Using (\ref{Eq28}), (\ref{Eq29}), (\ref{Eq30}), and (\ref{Eq31}) we have in this case the following expressions:
\begin{align*} 
&f_{1}=\Phi_{{1\left(\sin p\right)}}=\frac{\left(e^{{-V}}+e^{V}\right)\sin \frac{a+\overline{a}}{2}}{2}-\frac{\left(a-\overline{a}\right)\left(e^{{-V}}-e^{V}\right)\cos \frac{a+\overline{a}}{2}}{{4V}}, \\&f_{2}=\Phi_{{2\left(\sin p\right)}}=-\frac{\left(e^{{-V}}-e^{V}\right)\cos \frac{a+\overline{a}}{2}}{{2V}}b,\\  &g_{1}=\Phi_{{1\left(\cos p\right)}}=\frac{\left(e^{{-V}}+e^{V}\right)\cos \frac{a+\overline{a}}{2}}{2}+\frac{\left(a-\overline{a}\right)\left(e^{{-V}}-e^{V}\right)\sin \frac{a+\overline{a}}{2}}{{4V}},\\ & g_{2}=\Phi_{{2\left(\cos p\right)}}=\frac{\left(e^{{-V}}-e^{V}\right)\sin \frac{a+\overline{a}}{2}}{{2V}}b.
\end{align*}
Substituting these expressions into (\ref{Eq52}) and taking into account that $V=\overline{V}$ and $\beta=\overline{\beta}$ we obtain as follows:
\begin{equation} \label{Eq53} 
 \begin{gathered}
Re\left(\sin p \cdot \cos p\right)=f_{1}g_{1}-f_{2}\overline{g_{2}}\\
=\left[\frac{\left(e^{{-V}}+e^{V}\right)\sin\frac{a+\overline{a}}{2}}{2}
-\frac{\left(a-\overline{a}\right)\left(e^{{-V}}-e^{V}\right)\cos \frac{a+\overline{a}}{2}}{{4V}}\right]\left[\frac{\left(e^{{-V}}+e^{V}\right)\cos\frac{a+\overline{a}}{2}}{2}\right.\\
+\left. \frac{\left(a-\overline{a}\right)\left(e^{{-V}}-e^{V}\right)\sin \frac{a+\overline{a}}{2}}{{4V}}\right]+\frac{\left(e^{{-V}}-e^{V}\right)\cos\frac{a+\overline{a}}{2}}{2V}\,b\,\frac{\left(e^{{-V}}-e^{V}\right)\sin\frac{a+\overline{a}}{2}}{2V}\,\overline{b},
\end{gathered}
\end{equation}
\begin{equation} \label{Eq54} 
 \begin{gathered}
Im\left(\sin p \cdot \cos p\right)=f_{2}\overline{g_{1}}+f_{1}g_{2}\\
=-\frac{\left(e^{{-V}}-e^{V}\right)\cos\frac{a+\overline{a}}{2}}{2V}\,b\,\left[\frac{\left(e^{{-V}}+e^{V}\right)\cos\frac{a+\overline{a}}{2}}{2}
+\frac{\left(a-\overline{a}\right)\left(e^{{-V}}-e^{V}\right)\sin \frac{a+\overline{a}}{2}}{{4V}}\right]\\
+\left[\frac{\left(e^{{-V}}+e^{V}\right)\sin\frac{a+\overline{a}}{2}}{2}
-\frac{\left(a-\overline{a}\right)\left(e^{{-V}}-e^{V}\right)\cos \frac{a+\overline{a}}{2}}{{4V}}\right]\frac{\left(e^{{-V}}-e^{V}\right)\sin\frac{a+\overline{a}}{2}}{2V}\,b \\ =\frac{\left(e^{{-2V}}-e^{2V}\right)}{4V}\,b\,\left(-\cos^{2}\frac{a+\overline{a}}{2}+\sin^{2}\frac{a+\overline{a}}{2}\right).
\end{gathered}
\end{equation}

\textit{The quaternionic product} $\cos p \cdot \sin p.$ In this case we must substitute into (\ref{Eq52}) the following expressions:
\begin{align*} 
f_{1}&=\Phi_{{1\left(\cos p\right)}}=\frac{\left(e^{{-V}}+e^{V}\right)\cos \frac{a+\overline{a}}{2}}{2}+\frac{\left(a-\overline{a}\right)\left(e^{{-V}}-e^{V}\right)\sin \frac{a+\overline{a}}{2}}{{4V}},\\   f_{2}&=\Phi_{{2\left(\cos p\right)}}=\frac{\left(e^{{-V}}-e^{V}\right)\sin \frac{a+\overline{a}}{2}}{{2V}}b,\\
g_{1}&=\Phi_{{1\left(\sin p\right)}}=\frac{\left(e^{{-V}}+e^{V}\right)\sin \frac{a+\overline{a}}{2}}{2}-\frac{\left(a-\overline{a}\right)\left(e^{{-V}}-e^{V}\right)\cos \frac{a+\overline{a}}{2}}{{4V}},\\ g_{2}&=\Phi_{{2\left(\sin p\right)}}=-\frac{\left(e^{{-V}}-e^{V}\right)\cos \frac{a+\overline{a}}{2}}{{2V}}b.
\end{align*}

After substitution we have as follows:
\begin{equation} \label{Eq55} 
 \begin{gathered}
Re\left( \cos p \cdot \sin p \right)=f_{1}g_{1}-f_{2}\overline{g_{2}}\\
=\left[\frac{\left(e^{{-V}}+e^{V}\right)\cos\frac{a+\overline{a}}{2}}{2}
+\frac{\left(a-\overline{a}\right)\left(e^{{-V}}-e^{V}\right)\sin \frac{a+\overline{a}}{2}}{{4V}}\right]\left[\frac{\left(e^{{-V}}+e^{V}\right)\sin\frac{a+\overline{a}}{2}}{2}\right.\\
-\left. \frac{\left(a-\overline{a}\right)\left(e^{{-V}}-e^{V}\right)\cos \frac{a+\overline{a}}{2}}{{4V}}\right]+\frac{\left(e^{{-V}}-e^{V}\right)\sin\frac{a+\overline{a}}{2}}{2V}\,b\,\frac{\left(e^{{-V}}-e^{V}\right)\cos\frac{a+\overline{a}}{2}}{2V}\overline{b},
\end{gathered}
\end{equation}
\begin{equation} \label{Eq56} 
 \begin{gathered}
Im \left( \cos p \cdot \sin p \right)=f_{2}\overline{{g_{1}}}+f_{1}g_{2}\\
=\frac{\left(e^{{-V}}-e^{V}\right)\sin\frac{a+\overline{a}}{2}}{2V}\,b\,\left[\frac{\left(e^{{-V}}+e^{V}\right)\sin\frac{a+\overline{a}}{2}}{2}
+\frac{\left(a-\overline{a}\right)\left(e^{{-V}}-e^{V}\right)\cos \frac{a+\overline{a}}{2}}{{4V}}\right]\\
-\left[\frac{\left(e^{{-V}}+e^{V}\right)\cos\frac{a+\overline{a}}{2}}{2}+\frac{\left(a-\overline{a}\right)\left(e^{{-V}}-e^{V}\right)\sin \frac{a+\overline{a}}{2}}{{4V}}\right]\frac{\left(e^{{-V}}-e^{V}\right)\cos\frac{a+\overline{a}}{2}}{2V}\,b\\
=\frac{\left(e^{{-2V}}-e^{2V}\right)}{4V}\,b\,\left(\sin^{2}\frac{a+\overline{a}}{2}-\cos^{2}\frac{a+\overline{a}}{2}\right).
\end{gathered}
\end{equation}

Comparing (\ref{Eq53}) and (\ref{Eq55}) as well as (\ref{Eq54}) and (\ref{Eq56}) we see that
\begin{equation*} 
Re\left(\sin p \cdot \cos p\right)=Re\left(\cos p \cdot \sin p\right),\,\,\,\,\,\,Im\left(\sin p\cdot \cos p\right)=Im\left(\cos p \cdot \sin p\right).
\end{equation*}

Thus we have shown that the quaternionic multiplication of H-holomorphic functions $\sin p$ and $\cos p$ behaves as commutative.

\hspace{20mm}

In conclusion, it is not superfluous to note  that the theory of H-holomorphic functions in question possesses, in a sense, the so-called "internal perfection", since this theory follows in essence from one main idea  \cite[\!p.\!\!~15]{pm:eac}: "each point of any real line is at the same time a point of some plane and space as a whole, and therefore any characterization of differentiability at a point must be the same, regardless of whether we think of that point as a point on the real axis or a point in the complex plane, or a point in three-dimensional space". It leads to introducing the conception of one holistic  notion of a holomorphic function that in the case of complex variables has a C-representation and in the case of  quaternions has a corresponding H-representation.  Moreover,  such a single vision is also confirmed by the studied quaternionic power series, which  in C- and H-representations  of any holomorphic function are similar and converge  with identical convergence radiuses. Analogously, decompositions of C-analytic and H-analytic functions have the same radiuses of convergence. We also can speak of the so-called "external justification" of the theory in question, since this theory  is, as far as we know, the only theory, which explains the fact that quaternionic multiplication of  H-holomorphic functions behaves as commutative. This is a reasonable ground to approve that the presented theory of H-holomorphic functions is true. 
 
  \label{sec:intro}
\bibliography{bibliography}

\begin{thebibliography}{99}
\bibitem{sud:qua} 
Sudbery, A.,
Quaternionic analysis.
\textit{Math. Proc. Camb. Phil. Soc.}, 85,
pp.199-225, 1979.
\bibitem{gs:pow}  
Centili, G,. Stoppato, C.,  \textit{Power series and analyticity over the
quaternions},
2009, avaliable at \href{https://https://arxiv.org/abs/0902.4679}{arXiv: 0902.4679}.
\bibitem{ghs:fun}  
 G\"{u}rlebeck, K., Habetha, K., Spr\"{o}ßig, W., 
\textit{Funktionentheorie in der Ebene und im Raum}, Birkh\"{a}user Verlag, Basel - Boston – Berlin, 2006.
\bibitem{pm:eac}  
Parfenov, M., 
\textit{Essentially Adequate Concept of Holomorphic Functions in Quaternionic Analysis}, American Journal of Mathematical Analysis. vol. 8, No. 1, 2020, pp.14-30. avaliable at \href{https://pubs.sciepub.com/ajma/8/1/3/}{ajma-8-1-3}.
\bibitem{pm:onno}  
Parfenov, M., 
\textit{On the notion of a quaternionic holomorphic function,} 2024, avaliable at \href{https://arxiv.org/abs/2402.08487}{arXiv: 2402.08487}.
\bibitem{ks:hn}  
Kantor, I. L., Solodovnikov, A. S.,
\textit{Hypercomplex Numbers. An Elementary Introduction to Algebras,} Springer-Verlag,
New York-Berlin-Heidelberg-London-Paris-Tokyo, 1989,  avaliable to download at \href{https://pdfcoffee.com/il-kantor-as-solodovnikov-hypercomplex-numbers-an-elementary-introduction-to-algebras-1989-pdf-free.html}{here}.
\bibitem{pm:oph}  
Parfenov, M., 
\textit{On Properties of Holomorphic Functions in Quaternionic Analysis}, American Journal of Mathematical Analysis. Vol. 5, No. 1, 2017, pp. 17-24. avaliable at \href{http://pubs.sciepub.com/ajma/5/1/4/}{ajma-5-1-4}.
\bibitem{mh:ca}  
Mathews, J. H., Howell, R. W.,
\textit{Complex Analysis for Mathematics and Engineering}, 3rd ed., Jones and Bartlett Publishers, 1997.
\bibitem{sh:ica}  
Shabat, B.V., 
\textit{An introduction to Complex Analysis,} Nauka, 1961, (in Russian), avaliable at \href{https://ikfia.ysn.ru/wp-content/uploads/2018/01/Shabat1961ru.pdf}{here}.
\bibitem{pm:hm}  
Polyanin, Andrei D., Manzhirov, Alexander V., 
\textit{Handbook of Mathematics for Engineers and Scientists}, Chapman \& Hall/CRC, 2007.
\bibitem{pm:qpc}  
Parfenov, M., 
\textit{A Quaternionic Potential Conception with Applying to 3D Potential Fields}, American Journal of Mathematical Analysis.  Vol. 7, No. 1, 2019, pp. 1-10. avaliable at \href{http://pubs.sciepub.com/ajma/7/1/1}{ajma-7-1-1}.
\bibitem{ap:tm}  
Apostol, Tom M.,
\textit{Mathematical Analysis, Second Edition}, Pearson Education, Inc., 1974.
\end{thebibliography}
\end{document}